\documentclass[review]{elsarticle}

\usepackage{color}
\usepackage{float}
\usepackage{amsmath}
\usepackage{amsthm}
\usepackage{epstopdf} 
\usepackage{mathptmx}      
\usepackage{lineno,hyperref}
\usepackage{latexsym}
\usepackage{amssymb}
\usepackage{amsfonts}
\usepackage{verbatim}
\usepackage[latin1]{inputenc}
\usepackage{color}
\usepackage{subfigure}
\usepackage{graphicx}
\usepackage{mathrsfs}
\usepackage{float}
\allowdisplaybreaks \allowdisplaybreaks[4]

\newtheorem{theorem}{Theorem}[section]
\newtheorem{remark}{Remark}[section]

\newtheorem{proposition}{Proposition}[section]
\newtheorem{lemma}{Lemma}[section]

\journal{Journal of \LaTeX\ Templates}










\begin{document}

\begin{frontmatter}

\title{Optimal Strong Convergence Rate of a Backward Euler Type Scheme for the Cox--Ingersoll--Ross Model Driven by Fractional Brownian Motion}

\author[mymainaddress]{Jialin Hong}
\ead{hjl@lsec.cc.ac.cn}
\author[mymainaddress]{Chuying Huang\corref{mycorrespondingauthor}}
\ead{huangchuying@lsec.cc.ac.cn}
\cortext[mycorrespondingauthor]{Corresponding author}
\author[mysecondaryaddress]{Minoo Kamrani}
\ead{m.kamrani@razi.ac.ir}
\author[mymainaddress]{Xu Wang}
\ead{wangxu@lsec.cc.ac.cn}

\address[mymainaddress]{Academy of Mathematics and Systems Science, Chinese Academy of Sciences, Beijing 100190, China; School of Mathematical Sciences, University of Chinese Academy of Sciences, Beijing 100049, China}
\address[mysecondaryaddress]{Department of Mathematics, Faculty of Sciences, Razi University, Kermanshah, Iran}

\begin{abstract}
In this paper, we investigate the optimal strong convergence rate of numerical approximations for the Cox--Ingersoll--Ross model driven by fractional Brownian motion with Hurst parameter $H\in(1/2,1)$. To deal with the difficulties caused by the unbounded diffusion coefficient, we study an auxiliary equation based on Lamperti transformation. By means of Malliavin calculus, we prove that the backward Euler scheme applied to this auxiliary equation ensures the positivity of the numerical solution, and is of strong order one. Furthermore, a numerical approximation for the original model is obtained and converges with the same order.
\end{abstract}

\begin{keyword}
Cox--Ingersoll--Ross model \sep fractional Brownian motion \sep backward Euler scheme \sep optimal strong convergence rate \sep Malliavin calculus
\MSC[2010] Primary 60H35; secondary 60H07, 60H10, 65C30
\end{keyword}

\end{frontmatter}

\section{Introduction}
The Cox--Ingersoll--Ross (CIR) model 
\begin{align}\label{model}
dr(t)&=\kappa (\theta-r(t))dt+\sigma\sqrt{r(t)}dB(t), \quad t\in (0,T]
\end{align}
is proposed in \cite{CIR85} to study the interest rate dynamics and is also used in the Heston model to describe the stochastic volatility \cite{Heston93}. The constants $\theta$, $\sigma$ and $\kappa$ characterize the long-term mean, the volatility and the speed of adjustment, respectively. 
In the classical case, $B$ is assumed to be the standard Brownian motion and there is an amount of articles devoted to numerical approximations and their strong convergence rates for \eqref{model}; see  \cite{A05,A13,BBD08,DNS12,GR11,HMS13,JKN09NM,NS14} and references therein.
According to the memory phenomena in the real market, an appropriate modification (see e.g. \cite{Hes18,HNS08,Rev68,Mishura1804}) for the CIR model is to replace the standard Brownian motion by the fractional Brownian motion (fBm) with Hurst parameter $H$. The goal of this paper is to give the first result on strong convergence rate for the numerical approximation of (1) when $H\in(\frac12,1)$ in which case \eqref{model} is understood as a pathwise Riemann--Stieltjes integral equation.


Comparing with the standard Brownian setting, two main difficulties in studying equations driven by fBms are the correlation between increments and the lack of martingale property, which make the fundamental convergence theorem (see \cite{MT04}) invalid in the numerical analysis. To deal with the above difficulties and to gain the strong convergence rates in multiplicative noise cases with $H\in(1/2,1)$, a priori estimates for numerical solutions in H\"older spaces are needed (see e.g., \cite{AIHP,HHW,CN2017,HLN16AAP}). It leads to an essential assumption that the coefficients are bounded, which is even not satisfied in general by globally Lipschitz ones.

To deal with unbounded or non-globally Lipschitz diffusion coefficients in \eqref{model}, we use the Lamperti transformation $X(t)=\sqrt{r(t)}$ and study the following auxiliary equation with a singular drift and an additive noise 
\begin{align}\label{model2}
dX(t)&=\frac12\kappa \left(\frac{\theta}{X(t)}-X(t)\right)dt+\frac12\sigma dB(t), \quad t\in (0,T].
\end{align}
Note that the backward Euler scheme applied to \eqref{model2} has an explicit expression which inherits the positivity of the exact solution, due to the special structure of the auxiliary equation. In this sense, it is more efficient to apply the backward Euler scheme to \eqref{model2} than to \eqref{model}. We refer to \cite{A05,A13,DNS12,NS14} for the study of the standard Brownian case. 
For the fractional Brownian case with $H\in(\frac12,1)$, we give the Malliavin derivative of the exact solution and the boundedness of the inverse moments of the exact solution, utilizing the techniques in  \cite{HNS08}. Based on these a priori estimates, we prove that the backward Euler scheme applied to \eqref{model2}, as well as the corresponding numerical approximation to \eqref{model}, converges with order one in the uniformly strong sense. Then the boundedness of the inverse moments of the numerical solution is also obtained.

Combining with the piecewise linear interpolation, we get that the continuous interpolation of the numerical solution also admits positive values on $[0,T]$.
We obtain that the strong convergence rate of this continuous interpolation in $L^{\infty}([0,T];L^p(\Omega;\mathbb{R}))$ is $O\Big(h^H\sqrt{\log \frac{T}{h}}\Big)$, where $h$ is the time step size. This rate matches the results for equations driven by additive fBms in the cases of the globally Lipschitz coefficient \cite{N06} and the one-sided dissipative Lipschitz coefficient with polynomial growth \cite{AMO09}. The strong convergence rate is optimal in the sense that it achieves the H\"older regularity of fBms. 
In addition, we derive the Malliavin derivative of the numerical solution based on the specific form of the backward Euler scheme, which serves as a counterpart with that of the exact solution.

The paper is organized as follows. In Section \ref{sec2}, we introuduce properties and Malliavin calculus with repsect to the fBm. We give the well-posedness and the boundedness of moments of the exact solution in Section \ref{sec3}. The strong convergence rates of the backward Euler type scheme and the continuous interpolation are obtained in Section \ref{sec4}, as well as the boundedness of the inverse moments and the Malliavin derivative of the numerical solution. We  present numerical experiments in Section \ref{sec5} to confirm our theoretical analysis.

\section{Preliminaries}\label{sec2}
In this section, we introduce some definitions and Malliavin calculus with respect to the fBm.
We refer to \cite{DU99PA,Nualart} for more details.
Throughout the paper, we use $C$ as a generic constant and use $C(\cdot)$ if necessary to mention the parameters it depends on.

The fBm $\{B(t)\}_{t\in[0,T]}$ on some probability space $(\Omega,\mathcal{F},\mathbb{P})$ is a centered Gaussian process with continuous sample paths and covariance
\begin{align*}
Cov(t,s):=\mathbb{E}\Big[B(t)B(s)\Big]=\frac12\bigg(t^{2H}+s^{2H}-|t-s|^{2H}\bigg),\quad \forall\ s,t\in [0,T],
\end{align*}
where $H\in(0,1)$ is the Hurst parameter. As a consequence, we have the $H$-H\"older regularity of the fBm in $L^p({\Omega;\mathbb{R}})$, i.e., 
\begin{align*}
\sup_{0\le s<t\le T}\frac{\|B(t)-B(s)\|_{L^p({\Omega;\mathbb{R}})}}{|t-s|^H}\le C(p), \quad \forall ~ p\ge 1.
\end{align*}
Further, we also have the $(H-\epsilon)$-H\"older regularity of continuous trajectories, i.e., for all $0<\epsilon<H$, there exists a nonnegative random variable $G(\epsilon,T)\in L^p(\Omega;\mathbb{R})$, $p\ge 1$, such that 
\begin{align*}
\sup_{0\le s<t\le T}\frac{|B(t)-B(s)|}{|t-s|^{H-\epsilon}}\le G(\epsilon,T), \quad {\rm a.s.}
\end{align*}

In the following, we always assume $H>1/2$. In this case, 
\begin{align*}
Cov(t,s)=\int_{0}^{t}\int_{0}^{s}\phi(\tau,u)dud\tau
\end{align*}
with  
$\phi(\tau,u):=\alpha_H|u-\tau|^{2H-2}$ and $\alpha_H:=H(2H-1)>0$. Define an inner product by 
\begin{align*}
<\mathbf 1_{[0,t]},\mathbf 1_{[0,s]}>_{\mathcal{H}}:=Cov(t,s).
\end{align*}
Denote by $\left(\mathcal{H},<\cdot,\cdot>_{\mathcal{H}}\right)$ the Hilbert space which is the closure of the space of all step functions on $[0,T]$ with respect to $<\cdot,\cdot>_{\mathcal{H}}$. The map $\mathbf 1_{[0,t]}\mapsto B(t)$ can be extended to an isometry from $\mathcal{H}$ onto the Gaussian space $\mathcal{H}_1$ associated with $B$ (\cite[Section 5]{Nualart}). More precisely, denote this isometry by $\varphi  \mapsto B(\varphi)$, then for any $\varphi,\psi\in\mathcal{H}$,
\begin{align*}
<\varphi,\psi>_{\mathcal{H}}=\mathbb{E}\Big[B(\varphi)B(\psi)\Big].
\end{align*}

Introduce the kernel $K_H(t,s):=c_Hs^{\frac12-H}\int_{s}^{t}(u-s)^{H-\frac32}u^{H-\frac12}du\mathbf 1_{\{s<t\}}$ with $c_H:=\sqrt{\frac{\alpha_H}{\beta(2-2H,H-\frac12)}}$ and $\beta(\cdot,\cdot)$ being the Beta function. Then the covariance has the expression
\begin{align*}
Cov(t,s)=\int_{0}^{t\wedge s}K_H(t,u)K_H(s,u)du.
\end{align*}
Define the operator $K^*$ from $\mathcal{H}$ to $L^2([0,T];\mathbb{R})$ by  $(K^*\varphi)(s):=\int_{s}^{T}\varphi(t) \frac{\partial K_H}{\partial t}(t,s)dt$.
Another isometry is then obtained between $\mathcal{H}$ and a closed subspace of $L^2([0,T];\mathbb{R})$ (\cite[Section 5]{Nualart}), i.e., 
\begin{align}\label{iso}
<\varphi,\psi>_{\mathcal{H}}=<K^*\varphi,K^*\psi>_{L^2([0,T];\mathbb{R})}.
\end{align}
Define the operator $K$ by $(K\upsilon)(t):=\int_{0}^{t}K_H(t,s)\upsilon(s)ds$, for $\upsilon\in L^2([0,T];\mathbb{R})$. Then we have that the image space $\mathcal{H}_H:=K(L^2([0,T];\mathbb{R}))$ is the associated Cameron--Martin space (\cite{DU99PA}). The injection from $\mathcal{H}$ to $\mathcal{H}_H$ is denoted by ${\rm R_H}:=K\circ K^*$, i.e., 
\begin{align*}
({\rm R_H}\varphi)(t):=\int_{0}^{t}K_H(t,s)(K^*\varphi)(s)ds.
\end{align*}

For any random variable 
\begin{align}\label{F}
F=f(B(t_1),\cdots,B(t_N)),
\end{align}
where $N\in \mathbb{N_+}$,  $t_1,\cdots,t_N\in[0,T]$ and $f\in\mathcal{C}^\infty_b(\mathbb{R}^N)$ is bounded smooth function with all derivatives bounded, the derivative operator $D$ on $F$ is defined by 
\begin{align*}
D_{\cdot}F:=\sum_{i=1}^{N}\frac{\partial f}{\partial x_i}(B(t_1),\cdots,B(t_N))\mathbf 1_{[0,t_i]}(\cdot),
\end{align*}
which is an $\mathcal{H}$-valued random variable.
The Sobolev space $\mathbb{D}^{1,p}$, $p\ge 1$, is the closure of the set containing all random variables in the form of \eqref{F} with respect to the norm 
\begin{align*}
\|F\|_{\mathbb{D}^{1,p}}:=\left( \mathbb{E}[|F|^p] + \mathbb{E}[\|DF\|_{\mathcal{H}}^p]  \right)^{\frac1p}.
\end{align*}

Consider the adjoint operator $\delta$ of the derivative operator $D$. If an $\mathcal{H}$-valued random variable $\varphi\in L^2(\Omega;\mathcal{H})$ satisfies 
\begin{align*}
\left| \mathbb{E}[<\varphi,DF>_{\mathcal{H}}]  \right|\le C(\varphi) \|F\|_{L^2(\Omega;\mathbb{R})},  \quad \forall~F\in \mathbb{D}^{1,2},
\end{align*}
then $\varphi\in {\rm Dom}(\delta)$ and $\delta(\varphi)\in L^2(\Omega;\mathbb{R})$ is characterized by
\begin{align*}
\mathbb{E}[<\varphi,DF>_{\mathcal{H}}]=\mathbb{E}[F\delta(\varphi)], \quad \forall~F\in \mathbb{D}^{1,2}.
\end{align*}
In fact, the definition of $\mathbb{D}^{1,p}$ can be extended to $\mathbb{D}^{1,p}(\mathcal{H})$ for $\mathcal{H}$-valued random variables and the space $\mathbb{D}^{1,2}(\mathcal{H})\subset{\rm Dom}(\delta)$ (\cite[Section 1.3]{Nualart}).

The Skorohod integral of $\varphi$ with respect to the fBm is defined by $\int_{0}^{T}\varphi(t) \delta B(t):=\delta(\varphi)$ and we have $\mathbb{E}\left[\int_{0}^{T}\varphi(t) \delta B(t)\right]=0$. 
The following lemma gives a precise estimate for the Skorohod integral in  $L^p(\Omega;\mathbb{R})$.

\begin{lemma}(\cite[Section 5.2.2]{Nualart})\label{Lp}
	Let $p>1$. Denote $\mathbb{D}^{1,p}(|\mathcal{H}|):=\{\varphi\in\mathbb{D}^{1,p}(\mathcal{H}):   \| \mathbb{E}[\varphi]\|^p_{|\mathcal{H}|}+ \mathbb{E}\left[  
	\|D\varphi\|^p_{|\mathcal{H}|\otimes|\mathcal{H}|}\right]<\infty\}$, where 
	\begin{align*}
	\| \varphi\|^2_{|\mathcal{H}|}:=\int_{[0,T]^2}|\varphi(\tau)||\varphi(u)|&\phi(\tau,u)d\tau du, \\
	\|D\varphi\|^2_{|\mathcal{H}|\otimes|\mathcal{H}|}:=\int_{[0,T]^4}|D_{\tau_1}\varphi(u_1)||D_{\tau_2}\varphi(u_2)|
	&\phi(\tau_1,\tau_2)\phi(u_1,u_2)d\tau_1du_1d\tau_2du_2. 
	\end{align*}
	If a process $\varphi\in\mathbb{D}^{1,p}(|\mathcal{H}|)$ belongs to the domain of $\delta$ in $L^p(\Omega;\mathbb{R})$, then 
	\begin{align*}
	\mathbb{E}\left[  \left| \int_{0}^{T}\varphi(t) \delta B(t) \right| ^p\right]\le
	C(H,p) \left(   \| \mathbb{E}[\varphi]\|^p_{|\mathcal{H}|}+ \mathbb{E}\left[  
	\|D\varphi\|^p_{|\mathcal{H}|\otimes|\mathcal{H}|}\right]\right).
	\end{align*}
\end{lemma}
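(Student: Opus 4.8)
The plan is to obtain the estimate by combining two ingredients: the classical $L^p$-continuity of the divergence operator on an abstract Gaussian space (Meyer's inequality), and the elementary observation that the kernel $\phi$ is nonnegative, which gives a continuous embedding of $|\mathcal{H}|$ into $\mathcal{H}$. Note that $\mathbb{D}^{1,p}(|\mathcal{H}|)\subset\mathbb{D}^{1,p}(\mathcal{H})$ by definition and $\varphi\in\mathrm{Dom}(\delta)$ by hypothesis, so $\delta(\varphi)=\int_0^T\varphi(t)\,\delta B(t)$ is already a well-defined element of $L^p(\Omega;\mathbb{R})$ and only the stated bound has to be proved.

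\emph{Step 1: Meyer's inequality for $\delta$.} First I would invoke the fact, valid on any Gaussian probability space, that for every $p>1$ the divergence operator is bounded from $\mathbb{D}^{1,p}(\mathcal{H})$ into $L^p(\Omega;\mathbb{R})$ in the sharp form
\begin{align*}
\mathbb{E}\left[\left|\delta(\varphi)\right|^p\right]\le C(p)\left(\|\mathbb{E}[\varphi]\|_{\mathcal{H}}^p+\mathbb{E}\left[\|D\varphi\|_{\mathcal{H}\otimes\mathcal{H}}^p\right]\right).
\end{align*}
I would recall its proof only schematically: splitting $\varphi=\mathbb{E}[\varphi]+(\varphi-\mathbb{E}[\varphi])$, the deterministic part contributes $\delta(\mathbb{E}[\varphi])=B(\mathbb{E}[\varphi])$, a centered Gaussian variable with $\|B(\mathbb{E}[\varphi])\|_{L^p(\Omega;\mathbb{R})}=c_p\|\mathbb{E}[\varphi]\|_{\mathcal{H}}$, while for the centered part one works in the chaos expansion and uses the Meyer equivalence between the Sobolev norm $\|\cdot\|_{\mathbb{D}^{1,p}}$ and the norm built from the Ornstein--Uhlenbeck generator, together with the boundedness of the associated Riesz-type transform, which ultimately rests on the hypercontractivity of the Ornstein--Uhlenbeck semigroup. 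This is precisely the content of \cite[Section 1.5]{Nualart}, so in the paper this step reduces to a citation.

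\emph{Step 2: from $\mathcal{H}$-norms to $|\mathcal{H}|$-norms.} Since $\phi(\tau,u)=\alpha_H|u-\tau|^{2H-2}\ge 0$, for any deterministic $\psi$ we have
\begin{align*}
\|\psi\|_{\mathcal{H}}^2=\int_{[0,T]^2}\psi(\tau)\psi(u)\phi(\tau,u)\,d\tau\,du\le\int_{[0,T]^2}|\psi(\tau)||\psi(u)|\phi(\tau,u)\,d\tau\,du=\|\psi\|_{|\mathcal{H}|}^2,
\end{align*}
and, because $\phi\otimes\phi\ge 0$, the analogous comparison holds for the tensor norm, $\|\Psi\|_{\mathcal{H}\otimes\mathcal{H}}\le\|\Psi\|_{|\mathcal{H}|\otimes|\mathcal{H}|}$, for any $\Psi$ represented by a kernel on $[0,T]^2$. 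Applying the first inequality with $\psi=\mathbb{E}[\varphi]$ and the second one pathwise with $\Psi=D\varphi$ and then taking $p$-th moments, I would bound the right-hand side of Step 1 by $C(p)\big(\|\mathbb{E}[\varphi]\|_{|\mathcal{H}|}^p+\mathbb{E}[\|D\varphi\|_{|\mathcal{H}|\otimes|\mathcal{H}|}^p]\big)$, which is the asserted estimate (the dependence on $H$ entering only through $\alpha_H$ inside the norms).

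\emph{Main obstacle.} The only substantial ingredient is Step 1, that is, Meyer's inequality itself; this is a deep classical result of Gaussian analysis, proved by Littlewood--Paley theory and hypercontractivity, which does not simplify in the fBm setting, so I would not attempt to reprove it. Everything specific to fractional Brownian motion here --- replacing the implicitly defined space $\mathcal{H}$ by the larger but fully explicit space $|\mathcal{H}|$ --- is elementary and uses nothing beyond the positivity of $\phi$. Consequently the natural way to present the lemma is as a quotation from \cite[Section 5.2.2]{Nualart}, as the authors do.
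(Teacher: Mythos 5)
The paper states this lemma as a direct quotation from \cite[Section 5.2.2]{Nualart} and offers no proof of its own, and your two-step argument---Meyer's inequality for $\delta$ on $\mathbb{D}^{1,p}(\mathcal{H})$ followed by the embedding $\|\cdot\|_{\mathcal{H}}\le\|\cdot\|_{|\mathcal{H}|}$ and $\|\cdot\|_{\mathcal{H}\otimes\mathcal{H}}\le\|\cdot\|_{|\mathcal{H}|\otimes|\mathcal{H}|}$ coming from the positivity of $\phi$---is exactly the argument given in that reference. Your sketch is correct and coincides with the intended (cited) proof, so there is nothing to add.
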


For $t\in [0,T]$, it is defined that $\int_{0}^{t}\varphi(u) \delta B(u):=\delta(\varphi \mathbf 1_{[0,t]})$. The maximal inequality for the Skorohod integral is also established.

\begin{lemma}(\cite[Section 5.2.2]{Nualart})\label{max}
	Let $pH>1$. Denote 
	$\mathbb{L}^{1,p}_H:=\{\varphi\in\mathbb{D}^{1,2}(|\mathcal{H}|):   \|\varphi\|_{p,1}<\infty\}$, where 
	\begin{align*}
	\|\varphi\|^p_{p,1}:=\int_{0}^{T} \mathbb{E}[|\varphi(u)|^p] du +
	\mathbb{E}\left[  \int_{0}^{T}  \left(\int_{0}^{T} | D_{\tau} \varphi(u) |^{\frac1H}   d\tau\right) ^{pH}   du       \right].
	\end{align*}
	If a process $\varphi\in\mathbb{L}^{1,p}_H$, then 
	\begin{align*}
	\mathbb{E}\left[  \sup_{0\le t\le T}\left| \int_{0}^{t}\varphi(u) \delta B(u) \right| ^p\right]\le
	C(T,H,p) \|\varphi\|^p_{p,1}.
	\end{align*}
\end{lemma}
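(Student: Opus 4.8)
Set $Z_t:=\int_0^t\varphi(u)\,\delta B(u)=\delta(\varphi\mathbf 1_{[0,t]})$, so that $Z_0=0$ and $Z_t-Z_s=\delta(\varphi\mathbf 1_{(s,t]})$ for $0\le s<t\le T$. The strategy is to upgrade Lemma~\ref{Lp} into a sharp H\"older-type moment bound for these increments carrying an \emph{additive} control functional, and then to pass to the uniform estimate by a scale-by-scale dyadic chaining argument. First one checks that $\varphi\mathbf 1_{(s,t]}\in\mathbb{D}^{1,p}(|\mathcal H|)$ (hence, for the integrands involved, in $\mathrm{Dom}(\delta)$ in $L^p$): this follows from $\varphi\in\mathbb{L}^{1,p}_H$ together with the Hardy--Littlewood--Sobolev embedding $\|f\|_{|\mathcal H|}\le C_H\|f\|_{L^{1/H}([0,T])}$ (legitimate since $2-2H\in(0,1)$) and its iterate $\|D\psi\|_{|\mathcal H|\otimes|\mathcal H|}^2\le C_H^2\big(\int_{[0,T]^2}|D_\tau\psi(u)|^{1/H}\,d\tau\,du\big)^{2H}$, plus H\"older in time (this is where $pH\ge1$ first enters). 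Applying Lemma~\ref{Lp} to $\varphi\mathbf 1_{(s,t]}$, bounding the two resulting $|\mathcal H|$-norms in this way, and exploiting that $\mathbf 1_{(s,t]}$ is deterministic — so $D_\tau(\varphi\mathbf 1_{(s,t]})(u)=(D_\tau\varphi(u))\mathbf 1_{(s,t]}(u)$ localises all $u$-integrals to $(s,t]$ — followed by one more H\"older/Jensen step in $u$ over $(s,t]$ with exponent $pH\ge1$, I expect to reach
\[
\mathbb E\big[|Z_t-Z_s|^p\big]\le C(H,p)\,|t-s|^{pH-1}\,\mu((s,t]),\qquad
\mu(A):=\int_A\mathbb E\big[|\varphi(u)|^p\big]\,du+\mathbb E\!\left[\int_A\Big(\int_0^T|D_\tau\varphi(u)|^{1/H}\,d\tau\Big)^{pH}\!du\right],
\]
a finite deterministic measure on $[0,T]$ with $\mu([0,T])\le\|\varphi\|_{p,1}^p=:M$.

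For $n\ge0$ put $t^n_j:=jT2^{-n}$ ($0\le j\le2^n$) and $\Delta^n_j:=Z_{t^n_{j+1}}-Z_{t^n_j}$. Using additivity of $\mu$ together with the increment bound, one would get
\[
\mathbb E\Big[\max_{0\le j<2^n}|\Delta^n_j|^p\Big]\le\mathbb E\Big[\sum_{j=0}^{2^n-1}|\Delta^n_j|^p\Big]\le C(H,p)\,(T2^{-n})^{pH-1}\sum_{j=0}^{2^n-1}\mu((t^n_j,t^n_{j+1}])=C(H,p)\,(T2^{-n})^{pH-1}M,
\]
so $\big\|\max_j|\Delta^n_j|\big\|_{L^p(\Omega)}\le C(H,p)^{1/p}(T2^{-n})^{H-1/p}M^{1/p}$. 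Since $Z_0=0$ and, once path-continuity is available, $\sup_{0\le t\le T}|Z_t|\le\sum_{n\ge1}\max_j|\Delta^n_j|$ (telescoping along successive dyadic refinements), the triangle inequality in $L^p(\Omega)$ gives
\[
\Big\|\sup_{0\le t\le T}|Z_t|\Big\|_{L^p(\Omega)}\le\sum_{n\ge1}\big\|\max_j|\Delta^n_j|\big\|_{L^p(\Omega)}\le C(H,p)^{1/p}M^{1/p}\sum_{n\ge1}(T2^{-n})^{H-1/p},
\]
and the geometric series converges precisely because $pH>1$; raising to the power $p$ yields the claim with a constant $C(T,H,p)$.

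To make the chaining rigorous and to ensure $\sup_{0\le t\le T}|Z_t|$ is measurable, I would prove the inequality first for smooth elementary integrands $\varphi$ — for which all integrability and domain hypotheses are automatic and $Z$ has continuous sample paths — and then extend it to $\varphi\in\mathbb{L}^{1,p}_H$ by density: applying the inequality to differences $\varphi_k-\varphi_\ell$ shows that $\int_0^\cdot\varphi_k\,\delta B$ is Cauchy in $L^p(\Omega;C([0,T];\mathbb R))$, and its limit is the desired continuous modification, obeying the bound. I expect the main obstacle to be the passage from the pointwise-in-time moment estimate to the uniform one: a direct Garsia--Rodemich--Rumsey or Kolmogorov--Chentsov argument applied to the increment bound would only produce the maximal inequality under the stronger restriction $pH>2$, since it is blind to the additivity of $\mu$; the resolution is to sum the $2^n$ dyadic increments at each scale \emph{before} taking expectations, which makes the $j$-summation cost nothing and leaves the scaling exponent $H-1/p$ of the chain sum positive in exactly the regime $pH>1$. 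Obtaining the sharp exponent $|t-s|^{pH-1}$ \emph{with} the additive control $\mu((s,t])$ in the first step — which hinges on the HLS embedding and on the localisation supplied by the deterministic cut-off $\mathbf 1_{(s,t]}$ — is the other point demanding care.
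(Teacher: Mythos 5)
The paper does not actually prove Lemma~\ref{max}; it is quoted verbatim from \cite[Section 5.2.2]{Nualart}, where it is established by the Al\`os--Nualart factorization method. Your argument is correct (modulo the density and path-continuity details you explicitly flag, which are the standard ones) but takes a genuinely different route, so a comparison is worthwhile. The cited proof writes $\int_0^t\varphi(u)\,\delta B(u)=\frac{\sin(\pi\alpha)}{\pi}\int_0^t(t-s)^{\alpha-1}\bigl(\int_0^s(s-u)^{-\alpha}\varphi(u)\,\delta B(u)\bigr)\,ds$ for some $\alpha$ with $\tfrac1p<\alpha<H$ --- a choice possible precisely because $pH>1$ --- then moves the supremum inside by H\"older's inequality in $s$ and applies the one-point $L^p$ bound of Lemma~\ref{Lp} to the inner Skorohod integral at each fixed $s$; the supremum is thus killed analytically, by a deterministic fractional-integration identity. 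You instead upgrade Lemma~\ref{Lp} to the increment estimate $\mathbb{E}[|Z_t-Z_s|^p]\le C|t-s|^{pH-1}\mu((s,t])$ with an \emph{additive} control measure $\mu$ (via the embedding $\|f\|_{|\mathcal H|}\le C_H\|f\|_{L^{1/H}}$, the localization supplied by the deterministic cut-off $\mathbf 1_{(s,t]}$, and H\"older with exponent $pH$), and then run a dyadic chaining in which the $2^n$ increments at each scale are summed in $L^1(\Omega)$ before maximizing, so that the per-scale cost is $(T2^{-n})^{H-1/p}M^{1/p}$ and the chain sum converges exactly when $pH>1$. Your diagnosis that a naive Kolmogorov--Chentsov/GRR application of the same increment bound would require $pH>2$, and that additivity of $\mu$ is what rescues the sharp threshold, is accurate and is the real content of the scheme. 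What the factorization proof buys is brevity and no need for a separate continuity/approximation layer at the chaining stage; what your proof buys is elementarity (no fractional calculus identity) and an increment estimate of independent use. Either way the constant depends on $T$, $H$, $p$ only, as claimed.
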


Furthermore, the relationship between the Skorohod integral and the Riemann--Stieltjes integral can be characterized by
\begin{align}\label{int}
\int_{0}^{t} \varphi(u) dB(u)=\int_{0}^{t} \varphi(u) \delta B(u)+\int_{0}^{t} \int_{0}^{T}D_{\tau}\varphi(u)\phi(\tau,u) d\tau du.
\end{align}

\section{The CIR Model Driven by FBm}\label{sec3}
Consider the CIR model
\begin{align}\label{eq1}
dr(t)&=\kappa (\theta-r(t))dt+\sigma\sqrt{r(t)}dB(t), \quad t\in (0,T];\\
r(0)&=r_0,\nonumber
\end{align}
where $\kappa\theta>0$, $\sigma>0$, $r_0>0$ is the deterministic initial value, and $\{B(t)\}_{t\in[0,T]}$ is an fBm with Hurst parameter $H>1/2$. The stochastic integration is treated in the Riemann--Stieltjes sense and the ordinary chain rule holds. Utilizing the Lamperti transformation $X(t)=\sqrt{r(t)}$, we have an auxiliary equation
\begin{align}\label{eq2}
dX(t)&=\frac12\kappa \left(\frac{\theta}{X(t)}-X(t)\right)dt+\frac12\sigma dB(t), \quad t\in (0,T];\\
X(0)&=X_0:=\sqrt{r_0}.\nonumber
\end{align}

The next proposition gives the well-posedness of equation \eqref{eq2}, as well as the boundedness of moments of the exact solution.

\begin{proposition}\label{proX}
	Let $T>0$ and $p\ge 1$. There exists a unique solution $X$ of equation \eqref{eq2} satisfying 
	\begin{align*}
	\left(\mathbb{E}\left[\sup_{0\le t\le T}|X(t)|^p\right]\right)^{\frac1p}\le C(T,H,p,X_0,\kappa,\theta,\sigma).
	\end{align*}
	Moreover, if the following condition holds 
	\begin{align}\label{app}
	\kappa\theta \exp\left(\frac{\kappa s}{2}\right) \ge (p+1)\int_{0}^{s} \frac{\sigma^2}{2} \exp\left(\frac{\kappa \tau}{2}\right) \phi(\tau,s)d\tau, \quad \forall ~ 0\le s\le T,
	\end{align}
	then 
	\begin{align*}
	\sup_{0\le t\le T}\left(\mathbb{E}\left[|X(t)^{-1}|^{p}\right]\right)^{\frac1p}\le C(T,X_0,\kappa).
	\end{align*}
\end{proposition}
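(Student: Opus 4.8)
The plan is to exploit the additivity of the noise in \eqref{eq2}. Writing $Y(t):=X(t)-\tfrac{\sigma}{2}B(t)$, for each fixed continuous sample path of $B$ the process $Y$ solves the random ordinary differential equation $\dot Y(t)=b\big(Y(t)+\tfrac{\sigma}{2}B(t)\big)$, $Y(0)=X_0>0$, where $b(x):=\tfrac{\kappa}{2}\big(\tfrac{\theta}{x}-x\big)$ is locally Lipschitz on $(0,\infty)$ with $b(x)\to+\infty$ as $x\downarrow0$ and $b(x)\to-\infty$ as $x\to\infty$. Classical ODE theory gives a unique maximal solution, and these one-sided bounds on the drift confine $X(t)=Y(t)+\tfrac{\sigma}{2}B(t)$ to a path-dependent compact subinterval of $(0,\infty)$ on $[0,T]$, yielding global existence, uniqueness, and the strict positivity $X(t)>0$ on $[0,T]$ a.s. The trapping is moreover quantitative: one checks that $\dot Y(t)<0$ whenever $Y(t)>\max\big(\sigma\sup_{[0,T]}|B|,\,2\sqrt{\theta}\big)$, hence $\sup_{[0,T]}X\le X_0+2\sqrt{\theta}+2\sigma\sup_{[0,T]}|B|$; since the Gaussian supremum $\sup_{[0,T]}|B|$ has finite moments of all orders, taking $L^p(\Omega)$-norms gives the first estimate.

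For the negative moments, I would first determine the Malliavin derivative of $X$. Differentiating \eqref{eq2}, using $D_\tau B(t)=\mathbf 1_{[0,t]}(\tau)$ and the causality property $D_\tau X(s)=0$ for $s<\tau$, gives the linear equation
\begin{align*}
D_\tau X(t)=\frac{\sigma}{2}-\frac{\kappa}{2}\int_{\tau}^{t}\big(\theta X(s)^{-2}+1\big)D_\tau X(s)\,ds,\qquad \tau<t,
\end{align*}
whose solution is
\begin{align*}
D_\tau X(t)=\frac{\sigma}{2}\exp\!\Big(-\frac{\kappa}{2}\int_{\tau}^{t}\big(\theta X(s)^{-2}+1\big)\,ds\Big)\mathbf 1_{\{\tau<t\}};
\end{align*}
in particular $0\le D_\tau X(t)\le \tfrac{\sigma}{2}e^{-\kappa(t-\tau)/2}\mathbf 1_{\{\tau<t\}}$. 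This nonnegative, exponentially decaying bound is the decisive ingredient.

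Next I would pass to $U(t):=e^{\kappa t/2}X(t)>0$, which by \eqref{eq2} satisfies $dU(t)=\tfrac{\kappa\theta}{2}e^{\kappa t}U(t)^{-1}dt+\tfrac{\sigma}{2}e^{\kappa t/2}dB(t)$, and apply the Riemann--Stieltjes chain rule to $g(u)=u^{-p}$. Rewriting the noise term of $dg(U)$ via the transfer formula \eqref{int} and using that the Skorohod integral has zero expectation leads (formally) to
\begin{align*}
\mathbb{E}\big[U(t)^{-p}\big]={}&X_0^{-p}-\frac{p\kappa\theta}{2}\int_0^t e^{\kappa u}\,\mathbb{E}\big[U(u)^{-p-2}\big]\,du\\
&+\frac{p(p+1)\sigma}{2}\int_0^t\!\!\int_0^T e^{\kappa u/2}\,\mathbb{E}\big[U(u)^{-p-2}D_\tau U(u)\big]\,\phi(\tau,u)\,d\tau\,du.
\end{align*}
Because $D_\tau U(u)=e^{\kappa u/2}D_\tau X(u)\le \tfrac{\sigma}{2}e^{\kappa\tau/2}\mathbf 1_{\{\tau<u\}}$ and $U(u)^{-p-2}\ge0$, the last term is at most
\begin{align*}
\frac{p(p+1)\sigma^2}{4}\int_0^t e^{\kappa u/2}\Big(\int_0^u e^{\kappa\tau/2}\phi(\tau,u)\,d\tau\Big)\mathbb{E}\big[U(u)^{-p-2}\big]\,du,
\end{align*}
and assumption \eqref{app} is exactly the inequality $\tfrac{p\kappa\theta}{2}e^{\kappa u}\ge \tfrac{p(p+1)\sigma^2}{4}e^{\kappa u/2}\int_0^u e^{\kappa\tau/2}\phi(\tau,u)\,d\tau$ for all $u\in[0,T]$. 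Hence $t\mapsto\mathbb{E}[U(t)^{-p}]$ is nonincreasing, so $\mathbb{E}[U(t)^{-p}]\le X_0^{-p}$, i.e. $\mathbb{E}[X(t)^{-p}]\le e^{p\kappa t/2}X_0^{-p}$, and $\sup_{[0,T]}\big(\mathbb{E}[|X(t)^{-1}|^p]\big)^{1/p}\le e^{\kappa T/2}X_0^{-1}=C(T,X_0,\kappa)$.

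The hard part is that this last computation is formally circular — it manipulates negative moments of $X$ whose finiteness is precisely what is being proved — and that the Malliavin-calculus steps (membership in $\mathbb{D}^{1,2}$, the mean-zero property of the Skorohod integrals) presuppose such integrability. I would handle this by a standard localization: replace $x\mapsto x^{-p}$ by a bounded smooth approximation, or stop the process at the first time $X$ reaches a level $\varepsilon>0$ (which, by the pathwise positivity above, tends to infinity as $\varepsilon\downarrow0$); then every expression is legitimate, Lemmas~\ref{Lp} and \ref{max} controlling the intermediate Skorohod integrals. One derives the estimate with constants independent of the truncation and passes to the limit by Fatou's lemma; the sign structure — hence the role of \eqref{app} — is unaffected by the truncation. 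The genuinely new feature compared with the classical Brownian analysis is the explicit form of $D_\tau X$ together with the observation that \eqref{app} is exactly the threshold at which the correction term generated by \eqref{int} is absorbed by the dissipative part of the drift.
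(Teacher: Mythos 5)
Your treatment of the inverse moments is essentially the paper's own argument: the same exponential change of variables (your $U(t)=e^{\kappa t/2}X(t)$ is the paper's $Z(t)$), the same explicit Malliavin derivative leading to the bound $0\le D_\tau U(u)\le\tfrac{\sigma}{2}e^{\kappa\tau/2}$, the same use of the transfer formula \eqref{int} together with the vanishing expectation of the Skorohod integral, and the same identification of \eqref{app} as precisely the threshold at which the trace correction is absorbed by the singular drift. Your proposed localization is also what the paper does: it applies the chain rule to $(\epsilon+Z(t))^{-p}$, whose relevant derivatives are bounded, and lets $\epsilon\downarrow 0$ at the end, so the manipulation of negative moments is never circular. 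That part of your plan is correct and matches the paper step for step.

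The genuine gap is in your first paragraph. The assertion that the one-sided limits $b(x)\to+\infty$ as $x\downarrow 0$ and $b(x)\to-\infty$ as $x\to\infty$ ``confine'' the path to a compact subinterval of $(0,\infty)$ does not prove strict positivity: mere divergence of the drift at $0^+$ cannot prevent a perturbed ODE from reaching zero. What matters is the \emph{rate} of the singularity measured against the H\"older modulus of the forcing. With a drift blowing up like $-\log x$ (integrable at $0^+$) the solution can hit zero, and even for the $x^{-1}$ rate the conclusion fails at $H=1/2$, where the classical CIR process reaches zero whenever $2\kappa\theta<\sigma^2$. So a genuine comparison argument exploiting $H>1/2$ and the precise $\kappa\theta/(2x)$ singularity is indispensable here; this is exactly the content of the Hu--Nualart--Song results that the paper invokes for this step, and your formula for $D_\tau X(t)$ silently presupposes it as well, since it requires $\int_\tau^t X(s)^{-2}\,ds<\infty$ pathwise. (Your quantitative upper confinement and the resulting $L^p$ bound via moments of the Gaussian supremum are fine, though as written they assume $\kappa>0$, whereas the proposition only assumes $\kappa\theta>0$.)
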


\begin{proof}
	Using the transformation $Z(t)=\exp\left(\frac{\kappa t}{2}\right)X(t)$, we have
	\begin{align}\label{eq3}
	dZ(t)&=\frac12\kappa\theta \exp\left(\kappa t\right)  \frac{1}{Z(t)} dt+\frac12\sigma  \exp\left(\frac{\kappa t}{2}\right) dB(t), \quad t\in (0,T];\\
	Z(0)&=X_0.\nonumber
	\end{align}
	One can check that equation \eqref{eq3} satisfies the assumptions (i)-(iii) proposed in \cite[Section 2]{HNS08}. As a result, it admits a global unique solution on $[0,T]$, which is strictly positive and never hits zero almost surely.
	
	For moments of the exact solution, it follows from \cite[Theorem 2.3 and Theorem 3.1]{HNS08} that 
	\begin{align*}
	\left(\mathbb{E}\left[\sup_{0\le t\le T}|Z(t)|^p\right]\right)^{\frac1p}\le C(T,H,p,X_0,\kappa,\theta,\sigma).
	\end{align*}
	Denote $f(x):=\frac{\kappa\theta}{2x} -\frac{\kappa x}{2}$ and then $f'(x)=-\frac{\kappa\theta}{2x^2}-\frac{\kappa}{2}$.
	Combining the Malliavin derivative of $X$
	\begin{align}\label{DeX}
	D_sX(t)=\frac12\sigma\exp\left(\int_{s}^{t}f'(X(\tau))d\tau\right)\mathbf 1_{[0,t]}(s),
	\end{align}
	the chain rule of the Malliavin derivative leads to
	\begin{align*}
	D_sZ(t)&=\exp\left(\frac{\kappa t}{2}\right)D_sX(t) \\
	&= \frac12\sigma\exp\left(\frac{\kappa s}{2}\right)\exp\left(\int_{s}^{t}\frac{-\kappa\theta\exp\left(\kappa \tau\right)}{2Z^2(\tau)}d\tau\right)\mathbf 1_{[0,t]}(s),
	\end{align*}
	then 
	\begin{align*}
	0\le D_sZ(t) \le \frac12\sigma\exp\left(\frac{\kappa s}{2}\right).
	\end{align*}
	For any $p\ge 1$ and $\epsilon>0$, similar to \cite[Proposition 3.4]{HNS08}, the chain rule applied to $\frac{1}{(\epsilon+Z(t))^p}$ yields
	\begin{align*}
	&\frac{1}{(\epsilon+Z(t))^p}\\
	=&\frac{1}{(\epsilon+X_0)^p}-p\int_{0}^{t}\frac{\kappa\theta \exp\left(\kappa s\right)  }{2Z(s)(\epsilon+Z(s))^{p+1}}ds-p\int_{0}^{t}\frac{\sigma  \exp\left(\frac{\kappa s}{2} \right) }{2(\epsilon+Z(s))^{p+1}}dB(s)\\
	\le& \frac{1}{(\epsilon+X_0)^p}-p\int_{0}^{t}\frac{\kappa\theta \exp\left(\kappa s\right)  }{2Z(s)(\epsilon+Z(s))^{p+1}}ds-p\delta\left(\frac{\sigma  \exp\left(\frac{\kappa (\cdot)}{2} \right) }{2(\epsilon+Z(\cdot))^{p+1}}\mathbf 1_{[0,t]}(\cdot)\right)\\
	&-p\int_{0}^{t}\int_{0}^{s}\left[-(p+1)\frac{\sigma  \exp\left(\frac{\kappa s}{2} \right) }{2(\epsilon+Z(s))^{p+2}}  D_{\tau}Z(s) \phi(\tau,s) \right]  d\tau ds,
	\end{align*}
	where we use the formula \eqref{int}.
	Therefore, 
	\begin{align*}
	&\mathbb{E}\left[\frac{1}{(\epsilon+Z(t))^p}\right]\\
	\le& \frac{1}{(\epsilon+X_0)^p}-p\mathbb{E}\left[\int_{0}^{t}\frac{\kappa\theta \exp\left(\kappa s\right)  -(p+1)\int_{0}^{s}\sigma  \exp\left(\frac{\kappa s}{2} \right)D_{\tau}Z(s) \phi(\tau,s)d\tau  }{2(\epsilon+Z(s))^{p+2}}ds\right]\\
	\le& \frac{1}{(\epsilon+X_0)^p}-p\mathbb{E}\left[\int_{0}^{t}\frac{\kappa\theta \exp\left(\kappa s\right)  -(p+1)\int_{0}^{s}\sigma \exp\left(\frac{\kappa s}{2} \right)\frac{\sigma}{2} \exp\left(\frac{\kappa \tau}{2}\right) \phi(\tau,s)d\tau }{2(\epsilon+Z(s))^{p+2}}ds\right].
	\end{align*}
	If constants $H,T,p,\kappa,\theta$ and $\sigma$ satisfy condition \eqref{app}, 
	then by taking $\epsilon\rightarrow0$, we have
	\begin{align*}
	\sup_{0\le t\le T}\left(\mathbb{E}\left[|Z(t)^{-1}|^{p}\right]\right)^{\frac1p}\le X_0^{-1}.
	\end{align*}
	We conclude the proof according to the relationship between $Z$ and $X$.
\end{proof}

\begin{remark}
	If $\kappa>0$, we give the following condition 
	\begin{align*}
	T^{2H-1}\le \frac{2\kappa\theta}{\sigma^2 H(p+1)}
	\end{align*}
	as a sufficient condition for \eqref{app}.
	If $\kappa<0$, a sufficient condition for \eqref{app} is 
	\begin{align*}
	s^{2H-1}\le \frac{2\kappa\theta\exp\left(\frac{\kappa s}{2}\right)}{\sigma^2 H(p+1)}, \quad \forall ~ 0\le s\le T.
	\end{align*}
	Note that the continuous function $y(s):= \frac{2\kappa\theta\exp\left(\frac{\kappa s}{2}\right)}{\sigma^2 H(p+1)}-s^{2H-1}$ satisfies $y(0)>0$ since $\kappa\theta>0$. There always exists $T>0$ satisfying \eqref{app}.
\end{remark}

\begin{remark}
	If equation \eqref{eq2} is driven by the standard Brownian motion, the sufficient condition for the boundedness of inverse  moments of $X$ depends on $p,\kappa,\theta,\sigma$, but is independent of $T$. We refer to \cite[Section 3]{DNS12} for more discussions.
\end{remark}


\section{A Backward Euler Type Scheme for the CIR Model}\label{sec4}
In this section, we apply the backward Euler scheme  to the auxiliary equation \eqref{eq2}, which is also suggested in \cite{A05,A13,DNS12,NS14} for the CIR model driven by standard Brownian motion. Then the numerical solution for the original equation \eqref{eq1} is obtained from the inverse of the Lamperti transformation. In this sense, we call it a backward Euler type scheme for the CIR model.

We first show that the scheme ensures the positivity of the numerical solution. 
Given $N\in\mathbb{N_+}$,  we denote by $h:=T/N$ the time step size. 
The backward Euler scheme applied to equation \eqref{eq2} is 
\begin{align}\label{sch1}
X_{n+1}=X_n+\frac12\kappa \left(\frac{\theta}{X_{n+1}}-X_{n+1}\right)h+\frac12 \sigma \Delta B_{n+1},
\end{align}
where $\Delta B_{n+1}=B(t_{n+1})-B(t_n)$, $t_n=nh$, $n=0,\cdots,N$. Since $\kappa\theta>0$, equation \eqref{sch1} has a unique positive solution for any $h>0$ satisfying $h\max\{0,-\kappa/2\}<1$, which is 
\begin{align}\label{sch2}
X_{n+1}=\frac{X_n+\frac12\sigma\Delta B_{n+1}+\sqrt{\left(  X_n+\frac12\sigma \Delta B_{n+1}\right)^2+\kappa h\theta \left(2+\kappa h\right)}}{2+\kappa h}.
\end{align}
Furthermore, we apply the piecewise linear interpolation to get the continuous interpolation $X^h$ on $[0,T]$. Namely, 
\begin{align}\label{sch3}
X^h(t)&=X_n+\frac{t-t_n}{h}(X_{n+1}-X_n), \quad \forall~t\in(t_n,t_{n+1}],~n=0,\cdots,N-1,\\
X^h(0)&=X_0.\nonumber
\end{align}
Consequently, we have $X^h$ positive on $[0,T]$.

\subsection{Optimal Strong Convergence Rate}
In the following, we give the strong convergence rate of the backward Euler scheme. Then the strong convergence rate of $r^h:=(X^h)^2$, which is an approximation of the solution to the original equation \eqref{eq1}, is obtained.

\begin{theorem} \label{thm1}
	Let $\xi\in(0,1)$. For $p\ge 2$, assume that 
	\begin{align}\label{appp}
	\kappa\theta \exp\left(\frac{\kappa s}{2}\right) \ge (3p+1)\int_{0}^{s} \frac{\sigma^2}{2} \exp\left(\frac{\kappa \tau}{2}\right) \phi(\tau,s)d\tau, \quad \forall ~ 0\le s\le T.
	\end{align}
	Then for any $h\in(0,1)$ satisfying $h{\max\{0,-\kappa/2\}}<1-\xi$, it holds that 
	\begin{align*}
	\left(\mathbb{E}\left[\sup_{1\le n \le N}|X(t_n)-X_n|^p\right]\right)^{\frac1p}\le C(T,H,p,X_0,\kappa,\theta,\sigma,\xi)h,
	\end{align*}
	where $X(\cdot)$ is the exact solution of equation \eqref{eq2} and $X_n$ is defined by scheme \eqref{sch2}.
\end{theorem}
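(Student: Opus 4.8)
The plan is to work with the exponentially rescaled processes $Z(t) = \exp(\kappa t/2) X(t)$ and the analogous numerical quantity $Z_n = \exp(\kappa t_n/2) X_n$, since the rescaled equation \eqref{eq3} has a monotone (strictly decreasing) drift $g(z) = \tfrac12 \kappa\theta e^{\kappa t} z^{-1}$ and additive noise, which is exactly the structure that makes backward Euler contractive. First I would write down the one-step relation satisfied by the error $e_n := Z(t_n) - Z_n$. From \eqref{eq3} integrated over $[t_n, t_{n+1}]$ and the backward Euler update, the error splits into three pieces: a contractive term coming from the implicit drift evaluated at the endpoint, a deterministic drift-quadrature defect $\int_{t_n}^{t_{n+1}}\bigl(g(Z(s)) - g(Z(t_{n+1}))\bigr)ds$, and — crucially — \emph{no} stochastic quadrature defect, because the noise is additive so the increment $\tfrac12\sigma\int_{t_n}^{t_{n+1}}e^{\kappa s/2}dB(s)$ is reproduced exactly by the scheme up to the error already incurred (here one must be slightly careful: the scheme uses $\tfrac12\sigma e^{\kappa t_n/2}\Delta B_{n+1}$-type terms after rescaling, so there is a small mismatch $\int_{t_n}^{t_{n+1}}(e^{\kappa s/2} - e^{\kappa t_{n+1}/2})\,dB(s)$, which is $O(h)$ in $L^p$ by $H$-Hölder regularity and $H>1/2$).

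The second step is to exploit monotonicity: since $g$ is decreasing, $(g(Z(t_{n+1})) - g(Z_{n+1}))(Z(t_{n+1}) - Z_{n+1}) \le 0$, so multiplying the one-step error identity by $e_{n+1}$ and using $\tfrac12(a^2 - b^2) \le (a-b)a$ gives
\begin{align*}
\tfrac12 e_{n+1}^2 \le \tfrac12 e_n^2 + e_{n+1}\cdot(\text{drift defect}) + e_{n+1}\cdot(\text{noise mismatch}),
\end{align*}
with the implicit-drift cross term having the favorable sign. Summing telescopically and applying a discrete Gronwall/Young argument converts this into a bound on $\sup_n |e_n|^2$ by the sum of squared local defects; to reach $L^p$ and the supremum over $n$ I would instead run the argument with the stopped maximal process, or apply the discrete version directly to $|e_n|^p$ after establishing the $L^2$-type bound pathwise, then take expectations and use the maximal inequality Lemma~\ref{max} for the Skorohod part of the noise mismatch and Lemma~\ref{Lp} for its remainder via \eqref{int}. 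The drift defect $g(Z(s)) - g(Z(t_{n+1}))$ is controlled by $|Z(s) - Z(t_{n+1})|$ times $\sup_s Z(s)^{-2}$ (mean-value theorem on $z \mapsto z^{-1}$), and $|Z(s) - Z(t_{n+1})| \le C h^{H-\epsilon}$ pathwise is not quite enough — I need the full order $h$, so I would instead bound $\int_{t_n}^{t_{n+1}} |Z(s) - Z(t_{n+1})|\,ds$ and use that $Z$ solves \eqref{eq3}, expanding $Z(s) - Z(t_{n+1})$ into its own drift integral (bounded by $h$ times inverse moments) plus a noise increment, the latter integrated against $ds$ contributing $\int_{t_n}^{t_{n+1}}|B(s)-B(t_{n+1})|\,ds = O(h^{1+H})$, hence $O(h)$ overall after dividing appropriately.

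The main obstacle will be the moment bounds on the \emph{inverse} powers $Z(s)^{-2}$ (and, for the $L^p$ estimate after Hölder, on $Z(s)^{-q}$ for $q$ up to roughly $2p$ or $3p$) that appear when estimating the drift defect and when splitting with Hölder's inequality to get from the $e^2$-type inequality to the $p$-th moment of the maximum — this is precisely why the hypothesis \eqref{appp} demands the sharper constant $(3p+1)$ rather than the $(p+1)$ of Proposition~\ref{proX}: one needs $\sup_{0\le t\le T}\mathbb{E}[Z(t)^{-3p}]<\infty$ (equivalently inverse moments of $X$ of order $3p$), which Proposition~\ref{proX} delivers under \eqref{appp} via the correspondence $p \rightsquigarrow 3p$. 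A secondary technical point is that the implicit scheme \eqref{sch2} must be shown to have inverse moments uniformly in $h$ so that $g(Z_{n+1})$ is well-behaved in the cross term — but since $Z_n = e^{\kappa t_n/2}X_n \ge e^{\kappa t_n/2}\cdot(\kappa h\theta)/(2+\kappa h) \cdot (\text{lower bound})$ from the explicit formula, and more robustly by a discrete analogue of the Proposition~\ref{proX} argument applied to $(\epsilon + Z_n)^{-p}$, this can be handled; the paper presumably records it separately, so in the proof of Theorem~\ref{thm1} I would simply invoke the pathwise positivity from \eqref{sch2} together with the fact that the monotone cross term only needs a \emph{sign}, not a quantitative inverse-moment bound on $Z_{n+1}$, deferring the numerical inverse moments to a later lemma. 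Finally, unwinding $Z \leftrightarrow X$ costs only the bounded factor $\exp(\kappa t_n/2) \le \exp(|\kappa|T/2)$, absorbed into the constant $C(T,H,p,X_0,\kappa,\theta,\sigma,\xi)$, where the $\xi$-dependence enters through the requirement $h\max\{0,-\kappa/2\}<1-\xi$ ensuring the contraction factor $(1+\kappa h/2)^{-1}$-type quantities stay bounded away from the degenerate regime.
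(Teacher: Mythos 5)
There is a genuine gap in how you propagate the local drift--quadrature defect, and it is precisely the step where the paper has to work hardest. Your plan is to bound the per-step defect $\int_{t_n}^{t_{n+1}}\bigl(g(Z(s))-g(Z(t_{n+1}))\bigr)\,ds$ by expanding $Z(s)-Z(t_{n+1})$ into a drift part of size $h$ plus a noise increment, and to control the noise contribution through $\int_{t_n}^{t_{n+1}}|B(s)-B(t_{n+1})|\,ds=O(h^{1+H})$. But once absolute values are placed inside and the $N=T/h$ local defects are summed --- which is what your telescoping-plus-Young/Gronwall scheme forces, since each defect is first multiplied by the random factor $e_{n+1}$ --- you obtain $N\cdot O(h^{1+H})=O(h^{H})$, not $O(h)$; a sum-of-squares version fares no better because the Gronwall step requires a Young weight of order $h$, again landing at $h^{H}$. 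Since $H<1$, this proves a rate strictly worse than order one. The missing idea is that the accumulated stochastic part of the quadrature defect must be kept together as a \emph{single} stochastic integral so that cancellation across steps is exploited: one writes the defect as $-\int_{t_n}^{t_{n+1}}(u-t_n)f'(X(u))\,dX(u)$, converts the $dB$ part into a Skorohod integral plus a trace term via \eqref{int}, and applies the maximal inequality of Lemma~\ref{max} to the global integral $\int_0^{t}(1-\tilde\kappa h)^{\lfloor u/h\rfloor}(u-t_{\lfloor u/h\rfloor})f'(X(u))\,\delta B(u)$, whose integrand is pointwise of size $h$; this yields $O(h)$ uniformly in $n$. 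Making this compatible with the implicit contraction factors $(1-\gamma_j h)^{-1}$ further requires a summation by parts, so that the weighted sum $\sum_j\tilde\zeta_j(\tilde R(t_{j+1})-\tilde R(t_j))$ is controlled by $\sup_j|\tilde R(t_j)|$; your energy inequality, which couples each local defect to $e_{n+1}$ before summing, leaves no room for this device.

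The remainder of your outline is broadly consistent with the paper. The monotone structure of the implicit step is used there in the form $f(X(t_{n+1}))-f(X_{n+1})=\gamma_{n+1}e_{n+1}$ with $\gamma_{n+1}\le\max\{0,-\kappa/2\}$, so --- as you correctly anticipate --- no quantitative inverse moments of the numerical solution are needed, only a sign. Your reading of \eqref{appp} is also essentially right: the constant $3p+1$ appears because $f'f$ and $f''$ behave like $x^{-3}$, so the $p$-th moments of the defects require $\mathbb{E}[X(u)^{-3p}]<\infty$, which Proposition~\ref{proX} delivers. Finally, note that the paper works directly with $X$ rather than with the rescaled process; your choice of $Z_n=e^{\kappa t_n/2}X_n$ is workable but introduces an extra mismatch between this quantity and the backward Euler scheme for \eqref{eq3} that the direct argument avoids.
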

\begin{proof}
	Denote $e_n:=X(t_n)-X_n$ and $f(x):=\frac12\kappa \left(\frac{\theta}{x}-x\right)$. 
	The chain rule and integration by parts of Riemann--Stieltjes integral yield
	\begin{align}\label{en+1}
	e_{n+1}=&e_{n}+  [f(X(t_{n+1}))-f(X_{n+1})] h  \\
	&-
	\int_{t_n}^{t_{n+1}}  \int_{t}^{t_{n+1}}   f'(X(u))[f(X(u))du+\frac12\sigma dB(u)]dt \nonumber\\
	=&e_{n}+  [f(X(t_{n+1}))-f(X_{n+1})] h  \nonumber\\
	&-
	\int_{t_n}^{t_{n+1}}  (u-t_n)   f'(X(u))[f(X(u))du+\frac12\sigma dB(u)] \nonumber\\
	=&:e_{n}+ [f(X(t_{n+1}))-f(X_{n+1})] h +R_{n}. \nonumber
	\end{align}
	Since $\kappa\theta>0$, $X(t_{n+1})>0$ and $X_{n+1}>0$, we have 
	\begin{align*}
	f(X(t_{n+1}))-f(X_{n+1})=\left( -\frac{\kappa\theta}{2X(t_{n+1})X_{n+1}}  -\frac{\kappa}{2} \right)e_{n+1}=:\gamma_{n+1}e_{n+1},
	\end{align*}
	where $\gamma_{n+1}\le \tilde{\kappa}$ with $0\le\tilde{\kappa}=\max\{0,-\frac{\kappa}{2}\}$.
	It follows that 
	\begin{align*}
	(1-\gamma_{n+1}h)e_{n+1}=e_n+R_{n}.
	\end{align*}
	
    Define $\zeta_0:=1$, $\zeta_n:=\prod_{j=1}^{n}(1-\gamma_j h)$, and $\tilde{e}_n:=\zeta_n e_n$.
    Multiplying the above formula by $\zeta_n$, we have 
    \begin{align*}
   \tilde{e}_{n+1}=\tilde{e}_n+\zeta_nR_{n}.
    \end{align*}
	According to \eqref{int}, we split $R_{n}$ into three terms
	\begin{align}\label{Rn}
	\begin{split}
	R_n
	=&-\int_{t_n}^{t_{n+1}}  (u-t_n)   f'(X(u))f(X(u))du- \int_{t_n}^{t_{n+1}} \frac12\sigma   (u-t_n)   f'(X(u))  \delta B(u)\\
	&-\int_{t_n}^{t_{n+1}}\int_{0}^{T}  \frac12\sigma   (u-t_n)  D_{\tau}[f'(X(u))]\phi(\tau,u) d\tau du.
	\end{split}
	\end{align}
	
	Defining $\tilde{\zeta}_n:=\zeta_n/(1-\tilde{\kappa} h)^n$, $\lfloor\frac{u}{h}\rfloor:=n$ for $u\in[t_n,t_{n+1})$, and 
	\begin{align*}
	\tilde{R}(t):= -\int_{0}^{t}  \frac12\sigma (1-\tilde{\kappa} h)^{\lfloor\frac{u}{h}\rfloor}  (u-t_{\lfloor\frac{u}{h}\rfloor})   f'(X(u))  \delta B(u),
	\end{align*}
	we obtain 
	\begin{align*}
	\tilde{\zeta}_n (\tilde{R}(t_{n+1})-\tilde{R}(t_n))=- \zeta_n \int_{t_n}^{t_{n+1}} \frac12\sigma   (u-t_n)   f'(X(u))  \delta B(u).
	\end{align*}
	Together with \eqref{Rn}, we deduce
	\begin{align*}
	\tilde{e}_n =& \tilde{e}_{n-1} - \int_{t_{n-1}}^{t_{n}} \zeta_{n-1} (u-t_{n-1})   f'(X(u))f(X(u))du
	+\tilde{\zeta}_{n-1} (\tilde{R}(t_{n})-\tilde{R}(t_{n-1}))\\
	&-\int_{t_{n-1}}^{t_{n}}\int_{0}^{T}  \frac12\sigma  \zeta_{n-1} (u-t_{n-1})  D_{\tau}[f'(X(u))]\phi(\tau,u) d\tau du\\
	=&-\int_{0}^{t_{n}} \zeta_{\lfloor\frac{u}{h}\rfloor} (u-t_{\lfloor\frac{u}{h}\rfloor})   f'(X(u))f(X(u))du+ \sum_{j=0}^{n-1} \tilde{\zeta}_j (\tilde{R}(t_{j+1})-\tilde{R}(t_j))\\
	&-\int_{0}^{t_n}\int_{0}^{T}  \frac12\sigma  \zeta_{\lfloor\frac{u}{h}\rfloor}   (u-t_{\lfloor\frac{u}{h}\rfloor})  D_{\tau}[f'(X(u))]\phi(\tau,u) d\tau du.
	\end{align*}
	Then the error $e_n$ satisfies
	\begin{align*}
	e_n = &-\int_{0}^{t_{n}} \frac{\zeta_{\lfloor\frac{u}{h}\rfloor}}{\zeta_n} (u-t_{\lfloor\frac{u}{h}\rfloor})   f'(X(u))f(X(u))du+ \sum_{j=0}^{n-1} \frac{\tilde{\zeta}_j}{\zeta_n} (\tilde{R}(t_{j+1})-\tilde{R}(t_j))\\
	&-\int_{0}^{t_n}\int_{0}^{T}  \frac12\sigma  \frac{\zeta_{\lfloor\frac{u}{h}\rfloor}}{\zeta_n} (u-t_{\lfloor\frac{u}{h}\rfloor})  D_{\tau}[f'(X(u))]\phi(\tau,u) d\tau du.
	\end{align*}
	Notice that 
	\begin{align*}
	\sum_{j=0}^{n-1} \tilde{\zeta}_j (\tilde{R}(t_{j+1})-\tilde{R}(t_j))=\tilde{\zeta}_{n-1}\tilde{R}(t_n)+\sum_{j=1}^{n-1} (\tilde{\zeta}_{j-1}-\tilde{\zeta}_{j}) \tilde{R}(t_j).
	\end{align*}
	Since $h$ satisfies $h\tilde{\kappa}<1-\xi$, we have $0<\xi<1-\tilde{\kappa} h\le1-\gamma_j h$ and then $0<\frac{\tilde{\zeta}_{j-1}}{\tilde{\zeta}_j}=\frac{1-\tilde{\kappa} h}{1-\gamma_j h}\le 1$. 
    Hence we obtain
	\begin{align*}
	\left|\sum_{j=0}^{n-1} \tilde{\zeta}_j (\tilde{R}(t_{j+1})-\tilde{R}(t_j))\right|
	&\le \tilde{\zeta}_{n-1}\left|\tilde{R}(t_n)\right|+\sum_{j=1}^{n-1} (\tilde{\zeta}_{j}-\tilde{\zeta}_{j-1}) \left|\tilde{R}(t_j)\right|\\
	&\le 2\tilde{\zeta}_{n} \sup_{1\le j\le n}\left|\tilde{R}(t_j)\right|.
	\end{align*}
	According to $\xi<1-\tilde{\kappa} h\le 1$, we get
	\begin{align*}
	\tilde{\zeta}_n/\zeta_n=(1-\tilde{\kappa}h)^{-n}=\left(1+\frac{\tilde{\kappa} h}{1- \tilde{\kappa} h}\right)^{n}\le e^{\frac{\tilde{\kappa}T}{\xi}}, \quad n=0,\cdots,N,
	\end{align*}
	which implies
	\begin{align*}
    \sup_{1\le n \le N} \left|\sum_{j=0}^{n-1} \frac{\tilde{\zeta}_j}{\zeta_n} (\tilde{R}(t_{j+1})-\tilde{R}(t_j))\right|
    \le 2 e^{\frac{\tilde{\kappa}T}{\xi}} \sup_{1\le j\le N}\left|\tilde{R}(t_j)\right|.
	\end{align*}
	
	Therefore, combining with $\frac{\zeta_{\lfloor\frac{u}{h}\rfloor}}{\zeta_n}=\prod_{j=\lfloor\frac{u}{h}\rfloor+1}^{n}(1-\gamma_jh)^{-1}\le (1-\tilde{\kappa} h)^{n-\lfloor\frac{u}{h}\rfloor}\le e^{\frac{\tilde{\kappa}T}{\xi}}$, we get
	\begin{align}
	&\mathbb{E}\left[\sup_{1\le n \le N}|e_n|^p\right] \nonumber\\
	\le  &C(p)\mathbb{E}\left[\sup_{1\le n \le N}\left|\int_{0}^{t_{n}}\frac{\zeta_{\lfloor\frac{u}{h}\rfloor}}{\zeta_n}   (u-t_{\lfloor\frac{u}{h}\rfloor})   f'(X(u))f(X(u))du\right|^p\right]\nonumber\\
	&+C(p)\mathbb{E}\left[\sup_{1\le n \le N}\left| \sum_{j=0}^{n-1} \frac{\tilde{\zeta}_j}{\zeta_n} (\tilde{R}(t_{j+1})-\tilde{R}(t_j))\right|^p\right]\nonumber\\
	&+C(p)\mathbb{E}\left[\sup_{1\le n \le N}\left|\int_{0}^{t_n}\int_{0}^{T}  \frac12\sigma  \frac{\zeta_{\lfloor\frac{u}{h}\rfloor}}{\zeta_n} (u-t_{\lfloor\frac{u}{h}\rfloor})  D_{\tau}[f'(X(u))]\phi(\tau,u) d\tau du\right|^p\right]\nonumber\\
	\le  &C(T,p,\kappa,\xi)\bigg(\mathbb{E}\left[\left(\int_{0}^{T} \left| (u-t_{\lfloor\frac{u}{h}\rfloor})   f'(X(u))f(X(u))\right|du\right)^p\right]\nonumber\\
	&+\mathbb{E}\left[\sup_{1\le j\le N}\left|\tilde{R}(t_j)\right|^p\right]+\mathbb{E}\left[\left(\int_{0}^{T}\int_{0}^{T} \left| \frac12\sigma  (u-t_{\lfloor\frac{u}{h}\rfloor})  D_{\tau}[f'(X(u))]\phi(\tau,u)\right| d\tau du\right)^p\right]\bigg)\nonumber\\
	=&:C(T,p,\kappa,\xi)(I+II+III),\label{en}
	\end{align}
	where the three terms $I,II,III$ are dealt with respectively in the following.
	
	Recall that $f(x)=\frac{\kappa\theta}{2x} -\frac{\kappa x}{2}$ and  $f'(x)=-\frac{\kappa\theta}{2x^2}-\frac{\kappa}{2}$.
	By Proposition \ref{proX}, the assumption \eqref{appp} implies
	\begin{align}\label{ap1}
	\mathbb E [X(t)^{-3p}]\le C(T,X_0,\kappa), \quad \forall~t\in[0,T].
	\end{align}
	According to the Minkowski inequality, the first term satisfies
	\begin{align*}
	I=&\left\|\int_{0}^{T} \left| (u-t_{\lfloor\frac{u}{h}\rfloor})   f'(X(u))f(X(u))\right|du\right\|^p_{L^{p}(\Omega;\mathbb{R})} \\
	\le&h^p \left( \int_{0}^{T} \|f'(X(u))f(X(u))\|_{L^{p}(\Omega;\mathbb{R})}  du\right)^p\\
	\le &C(T,p,X_0,\kappa)h^p.
	\end{align*}

	The chain rule of the Malliavin derivative gives that for any $u\in[0,T]$,
	\begin{align*}
	D_{\tau}[f'(X(u))]=f''(X(u))D_{\tau}X(u)=f''(X(u))\frac{\sigma}{2} \exp\left(\int_{\tau}^{u}f'(X(\iota))d\iota\right)\mathbf 1_{[0,u]}(\tau).
	\end{align*}
	Noticing that $f'(x)=-\frac{\kappa\theta}{2x^2}-\frac{\kappa}{2}\le -\frac{\kappa}{2}$ for $x>0$ and 
	$f''(x)=\frac{\kappa\theta}{x^3}$, we get 
	\begin{align*}
	0\le D_{\tau}[f'(X(u))]\le C(T,\kappa,\sigma) f''(X(u))\mathbf 1_{[0,u]}(\tau).
	\end{align*}
	Based on \eqref{ap1} and the fact 
	\begin{align*}
	\int_{0}^{T}\phi(\tau,u) d\tau \le 2HT^{2H-1},\quad \forall ~u\in[0,T],
	\end{align*}
	we obtain for the third term that
	\begin{align*}
	III=&\left\| \int_{0}^{T}\int_{0}^{T} \left| \frac12\sigma  (u-t_{\lfloor\frac{u}{h}\rfloor})  D_{\tau}[f'(X(u))]\phi(\tau,u)\right| d\tau du  \right\|^p_{L^{p}(\Omega;\mathbb{R})}\\
	\le& C(T,p,\kappa,\sigma)h^p\left( \int_{0}^{T} \left( \left\|f''(X(u))\right\|_{L^{p}(\Omega;\mathbb{R})}\int_{0}^{T}\phi(\tau,u) d\tau\right)du\right)^p  \\
	\le & C(T,H,p,X_0,\kappa,\theta,\sigma)h^p.
	\end{align*}
	
	Since $\frac12<H<1$, we know that $pH>1$ if $p\ge 2$. For the second term, 
	from Lemma \ref{max} and H\"older's inequality, we obtain
	\begin{align*}
	II=&\mathbb{E}\left[\sup_{1\le j\le N}\left|\tilde{R}(t_j)\right|^p\right]\\
	\le&C(T,H,p)  \int_{0}^{T} \mathbb{E}\left[\left|\frac12\sigma (1-\tilde{\kappa} h)^{\lfloor\frac{u}{h}\rfloor}  (u-t_{\lfloor\frac{u}{h}\rfloor})   f'(X(u)) \right|^p\right] du\\
	&+
	C(T,H,p)
	\mathbb{E}\left[  \int_{0}^{T}  \left(\int_{0}^{T}  \left| \frac12\sigma (1-\tilde{\kappa} h)^{\lfloor\frac{u}{h}\rfloor}  (u-t_{\lfloor\frac{u}{h}\rfloor}) D_{\tau}[ f'(X(u)) ]  \right|^{\frac1H}   d\tau\right) ^{pH}   du       \right]\\
	\le&C(T,H,p,\sigma) h^p \int_{0}^{T} \mathbb{E}\left[\left|f'(X(u)) \right|^p\right] du\\
	&+
	C(T,H,p,\sigma) h^p
	\mathbb{E}\left[  \int_{0}^{T}  \left(\int_{0}^{T}  \left|  D_{\tau}[ f'(X(u)) ]  \right|^{\frac1H}   d\tau\right) ^{pH}   du       \right]\\
	\le&C(T,H,p,X_0,\kappa,\theta,\sigma,\xi) h^p 
	+
	C(T,H,p,\sigma) h^p
	  \int_{0}^{T}  \mathbb{E}\left[  \left|  f''(X(u)  \right|^{p} \right]  du\\
	  \le& C(T,H,p,X_0,\kappa,\theta,\sigma,\xi)h^p.
	\end{align*}
	
	Finally, it turns out that 
	\begin{align*}
	\mathbb{E}\left[\sup_{1\le n \le N}|e_n|^p\right] \le C(T,H,p,X_0,\kappa,\theta,\sigma,\xi)h^p.
	\end{align*}
\end{proof}


\begin{remark}
	For the CIR model driven by standard Brownian motion, we refer to \cite{A13,NS14} and references therein for the converegence analysis. 
	For equations driven by globally Lipschitz drift terms and additive fBms, we refer to \cite{MC11,N06} for the convergence analysis.
\end{remark}


Together with the H\"older regularity of the fBm, we obtain the following strong convergence rate for the continuous interpolation $X^h$ on $[0,T]$.

\begin{theorem}\label{thm2}
	Let $\xi\in(0,1)$. For $p\ge 2$, assume that 
	\eqref{appp} holds.
	Then for any $h\in(0,1)$ satisfying $h{\max\{0,-\kappa/2\}}<1-\xi$, it holds that 
	\begin{align*}
	\left(\mathbb{E}\left[\sup_{0\le t\le T}|X(t)-X^h(t)|^p\right]\right)^{\frac1p}\le C(T,H,p,X_0,\kappa,\theta,\sigma,\xi)h^H\sqrt{\log \frac{T}{h}},
	\end{align*}
	where $X(\cdot)$ is the exact solution of equation \eqref{eq2} and $X^h(\cdot)$ is defined by scheme \eqref{sch2}-\eqref{sch3}.
\end{theorem}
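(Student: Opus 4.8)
The plan is to reduce the continuous estimate to the discrete one of Theorem \ref{thm1} together with the modulus of continuity of the driving fBm. Fix $n$ and $t\in(t_n,t_{n+1}]$, substitute $X_n=X(t_n)-e_n$ and $X_{n+1}=X(t_{n+1})-e_{n+1}$ into the defining relation \eqref{sch3}, and rearrange to get
\begin{align*}
X(t)-X^h(t)=\Big(1-\tfrac{t-t_n}{h}\Big)e_n+\tfrac{t-t_n}{h}\,e_{n+1}+\mathcal I(t),\qquad \mathcal I(t):=X(t)-X(t_n)-\tfrac{t-t_n}{h}\big(X(t_{n+1})-X(t_n)\big),
\end{align*}
so that $\mathcal I(t)$ is the error between $X$ and its own piecewise linear interpolant on $[t_n,t_{n+1}]$. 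Because $\tfrac{t-t_n}{h}\in[0,1]$ and $e_0=0$, the first two terms are dominated by $\sup_{1\le j\le N}|e_j|$, whose $L^p(\Omega;\mathbb R)$ norm is $O(h)$ by Theorem \ref{thm1}; since $H<1$, this is of strictly lower order than $h^H\sqrt{\log(T/h)}$. Hence everything reduces to bounding $\sup_{0\le t\le T}|\mathcal I(t)|$ in $L^p(\Omega;\mathbb R)$.

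Using \eqref{eq2} with $f(x)=\tfrac12\kappa(\theta/x-x)$, I would split $\mathcal I=\mathcal I_1+\tfrac12\sigma\,\mathcal I_2$ on $(t_n,t_{n+1}]$, where
\begin{align*}
\mathcal I_1(t)=\int_{t_n}^{t}f(X(u))\,du-\tfrac{t-t_n}{h}\int_{t_n}^{t_{n+1}}f(X(u))\,du,\qquad
\mathcal I_2(t)=\big(B(t)-B(t_n)\big)-\tfrac{t-t_n}{h}\big(B(t_{n+1})-B(t_n)\big).
\end{align*}
The drift part obeys $|\mathcal I_1(t)|\le 2\int_{t_n}^{t_{n+1}}|f(X(u))|\,du\le 2h\sup_{0\le u\le T}|f(X(u))|$, so it is $O(h)$ in $L^\infty([0,T];L^p(\Omega;\mathbb R))$ provided $\sup_{0\le u\le T}|f(X(u))|$ lies in $L^p(\Omega;\mathbb R)$; this in turn follows from the moment bound of Proposition \ref{proX} together with a uniform-in-time bound on $\mathbb E\big[\sup_{0\le u\le T}|X(u)^{-1}|^p\big]$, obtained by repeating the chain-rule computation in the proof of Proposition \ref{proX} and controlling the resulting Skorohod integral with the maximal inequality of Lemma \ref{max} under the hypothesis \eqref{appp}. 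For the noise part, $B(t)-B(t_n)$ and $B(t_{n+1})-B(t_n)$ are increments over subintervals of length at most $h$, whence
\begin{align*}
\sup_{0\le t\le T}|\mathcal I_2(t)|\le 2\sup_{0\le s<t\le T,\ t-s\le h}|B(t)-B(s)|.
\end{align*}

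The remaining, and decisive, ingredient is the sharp $L^p$ modulus of continuity
\begin{align*}
\Big\|\sup_{0\le s<t\le T,\ t-s\le h}|B(t)-B(s)|\Big\|_{L^p(\Omega;\mathbb R)}\le C(T,H,p)\,h^H\sqrt{\log\tfrac{T}{h}}.
\end{align*}
I would derive it from the Garsia--Rodemich--Rumsey inequality applied with $\Psi(x)=e^{x^2/4}-1$ and $p(u)=u^H$: since $B(t)-B(s)$ is centered Gaussian with variance $|t-s|^{2H}$, the random variable $\Lambda:=\int_0^T\!\!\int_0^T\Psi\big(|B(t)-B(s)|\,|t-s|^{-H}\big)\,ds\,dt$ has finite moments of every order, and GRR gives $|B(t)-B(s)|\le C\int_0^{|t-s|}\sqrt{\log(1+\Lambda/u^2)}\,u^{H-1}\,du\le C|t-s|^H\big(\sqrt{\log(1/|t-s|)}+\sqrt{\log(1+\Lambda)}+1\big)$; taking the supremum over $t-s\le h$, using that $u\mapsto u^H\sqrt{\log(1/u)}$ is increasing near $0$, and then the $L^p$ norm yields the claim. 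Collecting the three contributions gives $\big\|\sup_{0\le t\le T}|X(t)-X^h(t)|\big\|_{L^p(\Omega;\mathbb R)}\le Ch+Ch^H\sqrt{\log(T/h)}\le C\,h^H\sqrt{\log(T/h)}$. I expect the main obstacle to be exactly this modulus-of-continuity estimate: obtaining the logarithmic factor rather than a mere H\"older bound $h^{H-\epsilon}$ requires the exponential-Orlicz version of GRR and careful control of the tail of $\Lambda$; a secondary technical point is the uniform-in-time handling of the singular drift, which is the reason for imposing \eqref{appp}.
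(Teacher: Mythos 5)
Your decomposition of $X(t)-X^h(t)$ into the endpoint errors $e_n,e_{n+1}$ plus the piecewise-linear interpolation error of $X$ itself is algebraically identical to the one in the paper's proof, and the further split of that interpolation error into a drift part of order $h$ and the interpolation error of the fBm is also the same. The genuine difference lies in the key estimate for the noise term: the paper simply cites Theorem 6 of H\"usler--Piterbarg--Seleznjev \cite{AAP03PLE}, which bounds the uniform norm of the linear interpolation error of a Gaussian process directly and delivers the factor $h^H\sqrt{\log(T/h)}$, whereas you derive the slightly cruder but sufficient modulus of continuity $\sup_{t-s\le h}|B(t)-B(s)|$ from the Garsia--Rodemich--Rumsey inequality with an exponential Orlicz function. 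Your route is self-contained and yields the same rate; the one inaccuracy is the claim that $\Lambda$ has finite moments of every order --- with $\Psi(x)=e^{x^2/4}$ and increments standardized to unit variance one only gets $\mathbb{E}[\Lambda^k]<\infty$ for $k<2$ --- but this is harmless, since the argument only needs $\mathbb{E}[(\log(1+\Lambda))^{p/2}]<\infty$, which follows from $\mathbb{E}[\Lambda]<\infty$ and the slow growth of the logarithm. On the drift term you are in fact more careful than the paper: the paper invokes ``the arguments in the proof of Theorem \ref{thm1}'' without noting that controlling the supremum over $t\in[0,T]$ of the drift increments requires more than the pointwise-in-time inverse moment bound of Proposition \ref{proX} (a union bound over the $N$ subintervals only gives $h^{1-1/p}$, which is insufficient for small $p$ and large $H$). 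Your proposal to upgrade it to $\mathbb{E}[\sup_{0\le u\le T}|X(u)^{-1}|^p]<\infty$ via the chain-rule identity and the maximal inequality of Lemma \ref{max} is the right idea, though it is a genuine extra piece of work whose feasibility rests on \eqref{appp} supplying enough inverse moments to control the Malliavin-derivative terms appearing in that maximal estimate.
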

\begin{proof}
	By definition of $X^h$, for any $t\in(t_n,t_{n+1}]$,
	\begin{align*}
	X(t)-X^h(t)=&
	X(t)
	-\frac{t-t_n}{h}X_{n+1}-\frac{t_{n+1}-t}{h}X_n\\
	=&\frac{t_n-t}{h}\left[\int_{t}^{t_{n+1}}f(X(s))ds+\frac12\sigma(B(t_{n+1})-B(t))\right]\\
	&+\frac{t_{n+1}-t}{h}\left[\int_{t_n}^{t}f(X(s))ds+\frac12\sigma(B(t)-B(t_n))\right]\\
	&+\frac{t-t_n}{h}[X(t_{n+1})-X_{n+1}]+\frac{t_{n+1}-t}{h}[X(t_n)-X_n].
	\end{align*}
	Using the arguments in the proof of Theorem \ref{thm1} and the result in \cite[Theorem 6]{AAP03PLE} which shows 
	\begin{align*}
		&\left(\mathbb{E}\left[\sup_{0\le t\le T}\bigg|\frac{t_n-t}{h}(B(t_{n+1})-B(t) + \frac{t_{n+1}-t}{h}(B(t)-B(t_n)^p\right]\right)^{\frac1p}\\
		\le& C(T,H,p)h^H\sqrt{\log \frac{T}{h}},
	\end{align*}
	we conclude the result.
\end{proof}

\begin{remark}
	Based on $N$ observed values of the fBm, i.e., $\{B(t_1),\cdots,B(t_N)\}$, the strong convergence rate for the continuous interpolation of the numerical solution on $[0,T]$ in $L^p(\Omega;L^\infty([0,T];\mathbb{R}))$ is restricted by the H\"older regularity of the fBm.
	We refer to \cite{N06} for more discussions.
\end{remark}

Based on the inverse of the Lamperti transformation, we get the strong convergence rate for the backward Euler type scheme for the CIR model driven by fBm.

\begin{theorem}
	Let $\xi\in(0,1)$. For $p\ge 2$, assume that 
	\eqref{appp} holds.
	Then for any $h\in(0,1)$ satisfying $h{\max\{0,-\kappa/2\}}<1-\xi$, it holds that 
	\begin{align*}
\left(\mathbb{E}\left[\sup_{1\le n\le N}|r(t_n)-r^h(t_n)|^{\frac{p}{2}}\right]\right)^{\frac2p}\le C(T,H,p,X_0,\kappa,\theta,\sigma,\xi)h
\end{align*}
	and 
	\begin{align*}
	\left(\mathbb{E}\left[\sup_{0\le t\le T}|r(t)-r^h(t)|^{\frac{p}{2}}\right]\right)^{\frac2p}\le C(T,H,p,X_0,\kappa,\theta,\sigma,\xi)h^H\sqrt{\log \frac{T}{h}},
	\end{align*}
	where $r(\cdot)$ is the exact solution of equation \eqref{eq1} and $r^h(\cdot):=(X^h(\cdot))^2$.
\end{theorem}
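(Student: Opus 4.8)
The plan is to derive the statement as a corollary of Theorems \ref{thm1} and \ref{thm2} by exploiting the quadratic relation between the two models. Since $r=X^2$, $r^h=(X^h)^2$, and both $X$ and $X^h$ are nonnegative, for every $t\in[0,T]$ one has
\begin{align*}
|r(t)-r^h(t)|=|X(t)-X^h(t)|\,\big(X(t)+X^h(t)\big),
\end{align*}
and likewise $|r(t_n)-r^h(t_n)|=|X(t_n)-X_n|\,(X(t_n)+X_n)$ at the grid points. Taking the supremum in $t$ (resp. in $n$), using that each factor is nonnegative, and then applying the Cauchy--Schwarz inequality in $\Omega$ with conjugate exponents $2,2$, I would obtain
\begin{align*}
\mathbb{E}\left[\sup_{0\le t\le T}|r(t)-r^h(t)|^{\frac{p}{2}}\right]
&\le\left(\mathbb{E}\left[\sup_{0\le t\le T}|X(t)-X^h(t)|^{p}\right]\right)^{\frac12}\\
&\quad\times\left(\mathbb{E}\left[\sup_{0\le t\le T}\big(X(t)+X^h(t)\big)^{p}\right]\right)^{\frac12},
\end{align*}
and the analogous bound for $\mathbb{E}\big[\sup_{1\le n\le N}|r(t_n)-r^h(t_n)|^{p/2}\big]$ with $\sup_{1\le n\le N}|X(t_n)-X_n|^p$ and $\sup_{0\le n\le N}(X(t_n)+X_n)^p$ replacing the two right-hand factors.

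It then remains to control the two factors separately. The first is exactly what Theorem \ref{thm2} provides in the continuous case and Theorem \ref{thm1} in the discrete case: under \eqref{appp} and for $h$ in the stated range, $(\mathbb{E}[\sup_t|X(t)-X^h(t)|^p])^{1/p}\le C h^H\sqrt{\log(T/h)}$ and $(\mathbb{E}[\sup_n|X(t_n)-X_n|^p])^{1/p}\le Ch$. For the second factor one needs a moment bound uniform in $h$: the contribution of $X$ is $(\mathbb{E}[\sup_t|X(t)|^p])^{1/p}\le C$ by Proposition \ref{proX}, and for the numerical part I would use that the piecewise linear interpolation satisfies $\sup_{0\le t\le T}X^h(t)=\sup_{0\le n\le N}X_n$ together with the triangle inequality and Theorem \ref{thm1},
\begin{align*}
\left(\mathbb{E}\left[\sup_{0\le n\le N}|X_n|^{p}\right]\right)^{\frac1p}
\le&\left(\mathbb{E}\left[\sup_{0\le t\le T}|X(t)|^{p}\right]\right)^{\frac1p}\\
&+\left(\mathbb{E}\left[\sup_{1\le n\le N}|X(t_n)-X_n|^{p}\right]\right)^{\frac1p}\le C,
\end{align*}
the bound being uniform for $h\in(0,1)$ with $h\max\{0,-\kappa/2\}<1-\xi$ (recall $e_0=0$, so the range of $n$ in the error term is immaterial). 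Multiplying the two factors and raising to the power $2/p$ yields both displayed inequalities, the constant inheriting the dependence from Theorems \ref{thm1} and \ref{thm2}.

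I do not expect any serious obstacle; the only point demanding attention is the uniformity in $h$ of the moment bound for the numerical solution, which is handled as above by comparison with $X(t_n)$ and Proposition \ref{proX}. Alternatively, one could bound $\mathbb{E}[\sup_n|X_n|^p]$ directly from the explicit form \eqref{sch2} and a maximal estimate for $\sup_n|\Delta B_{n+1}|$, but routing it through Theorem \ref{thm1} is shorter and keeps the admissible range of $h$ and the hypothesis \eqref{appp} exactly as in the preceding theorems.
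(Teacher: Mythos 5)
Your proposal is correct and follows essentially the same route as the paper: factor $r-r^h=(X-X^h)(X+X^h)$, apply Cauchy--Schwarz, bound the error factor by Theorems \ref{thm1} and \ref{thm2}, and bound the sum factor uniformly in $h$ by comparing $X_n$ (resp. $X^h$) with $X$ via Proposition \ref{proX} and the convergence theorems. Your explicit remark that $\sup_{0\le t\le T}X^h(t)=\sup_{0\le n\le N}X_n$ for the piecewise linear interpolation is a small detail the paper leaves implicit, but otherwise the arguments coincide.
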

\begin{proof}
     Based on Proposition \ref{proX} and Theorem \ref{thm2}, we have
     \begin{align*}
     &\left(\mathbb{E}\left[\sup_{1\le n\le N}|X(t_n)+X^h(t_n)|^{p}\right]\right)^{\frac1p}\\
     \le& \left(\mathbb{E}\left[\sup_{1\le n\le N}|2X(t_n)+(X^h(t_n)-X(t_n))|^{p}\right]\right)^{\frac1p}\\
     \le& C\left(\mathbb{E}\left[\sup_{1\le n\le N}|X(t_n)|^{p}\right]\right)^{\frac1p}+C
     \left(\mathbb{E}\left[\sup_{1\le n\le N}|X^h(t_n)-X(t_n)|^{p}\right]\right)^{\frac1p}\le C.
     \end{align*}
	Since 
	\begin{align*}
	r(t)-r^h(t)=(X(t)-X^h(t))(X(t)+X^h(t)),
	\end{align*}
	H\"older's inequality and Theorem \ref{thm1} yield
	\begin{align*}
	&\left(\mathbb{E}\left[\sup_{1\le n\le N}|r(t_n)-r^h(t_n)|^{\frac{p}{2}}\right]\right)^{\frac2p}\\
	\le& \left(\mathbb{E}\left[\sup_{1\le n\le N}|X(t_n)-X^h(t_n)|^{p}\right]\right)^{\frac1p}
	\left(\mathbb{E}\left[\sup_{1\le n\le N}|X(t_n)+X^h(t_n)|^{p}\right]\right)^{\frac1p}
	\\
	\le& C(T,H,p,X_0,\kappa,\theta,\sigma,\xi)h.
	\end{align*}
	Similarly, Theorem \ref{thm2} shows
	\begin{align*}
\left(\mathbb{E}\left[\sup_{0\le t\le T}|r(t)-r^h(t)|^{\frac{p}{2}}\right]\right)^{\frac2p}\le C(T,H,p,X_0,\kappa,\theta,\sigma,\xi)h^H\sqrt{\log \frac{T}{h}}.
\end{align*}
\end{proof}

\subsection{Inverse Moments and the Malliavin derivative of the Numerical Solution}
In this section, we give the boundedness of inverse moments of the numerical solution and derive the Malliavin derivative of the numerical solution, which are regarded as counterparts with properties of the exact solution.

\begin{proposition}\label{invX}
	Suppose that the assumptions in Theorem \ref{thm1} hold. Then 
	\begin{align*}
	\sup_{n=1,\cdots,N}\|X_{n}^{-1}\|_{L^p(\Omega;\mathbb{R})}\le C(T,H,p,X_0,\kappa,\theta,\sigma,\xi).
	\end{align*}
\end{proposition}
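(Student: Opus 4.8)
The plan is to transfer the bound on the inverse moments of the exact solution (Proposition \ref{proX}, via the process $Z$) to the numerical scheme by mimicking the argument of that proposition at the discrete level, exploiting the explicit form \eqref{sch2}. First I would introduce the scaled numerical process $Z_n:=\exp(\kappa t_n/2)X_n$ and rewrite scheme \eqref{sch1} as a discretization of equation \eqref{eq3}; because the Lamperti-transformed scheme has the closed form \eqref{sch2}, $Z_n$ is strictly positive for every $n$ whenever $h\max\{0,-\kappa/2\}<1$, so $Z_n^{-1}$ is well defined. The goal is then to estimate $\mathbb{E}[(\epsilon+Z_n)^{-p}]$ uniformly in $n$ and let $\epsilon\to 0$.

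The key step is a discrete chain-rule / telescoping estimate for $(\epsilon+Z_{n+1})^{-p}$. Starting from the implicit relation satisfied by $Z_{n+1}$, I would write
$(\epsilon+Z_{n+1})^{-p}-(\epsilon+Z_n)^{-p}$
and expand it, isolating the drift contribution (which is $-p\,\kappa\theta\exp(\kappa t)/(2Z(\epsilon+Z)^{p+1})$-like and hence nonpositive) and the noise contribution $\tfrac12\sigma\exp(\kappa t_{n}/2)\Delta B_{n+1}$. The noise term must be handled as in the proof of Proposition \ref{proX}: split the Riemann--Stieltjes-type increment into a Skorohod part, whose expectation vanishes, plus a trace term of the form $\int\!\int D_\tau[(\epsilon+Z)^{-p-1}]\phi(\tau,u)\,d\tau\,du$, using \eqref{int}. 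For this I need the Malliavin derivative of the numerical solution $D_\tau Z_n$ (equivalently $D_\tau X_n$), which is derived later in this section from the explicit form \eqref{sch2}; crucially it satisfies the same two-sided bound $0\le D_\tau Z_n\le \tfrac12\sigma\exp(\kappa\tau/2)$ as in the continuous case. Plugging this bound into the trace term and summing, the condition \eqref{appp} (which controls $3p$, a fortiori $p$) forces the combined drift-plus-trace sum to be nonpositive, leaving $\mathbb{E}[(\epsilon+Z_n)^{-p}]\le (\epsilon+X_0)^{-p}$, uniformly in $n$; letting $\epsilon\to0$ gives $\sup_n\|Z_n^{-1}\|_{L^p}\le X_0^{-1}$, and undoing the scaling yields $\sup_n\|X_n^{-1}\|_{L^p}\le C(T,\kappa)X_0^{-1}$.

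I expect the main obstacle to be the discrete analogue of the chain rule: unlike the continuous Itô/Stratonovich computation in Proposition \ref{proX}, here I only have the algebraic increment $(\epsilon+Z_{n+1})^{-p}-(\epsilon+Z_n)^{-p}$, so I must control the second-order error arising from the nonlinearity of $x\mapsto(\epsilon+x)^{-p}$ evaluated between $Z_n$ and $Z_{n+1}$ and from the $O(h)$ backward-Euler correction. This requires moment bounds on $Z_n$ (available from Theorem \ref{thm1} together with Proposition \ref{proX}, since $\sup_n\|X_n\|_{L^q}\le\sup_n\|X(t_n)\|_{L^q}+\sup_n\|e_n\|_{L^q}\le C$ for every $q$) and the a priori positivity lower bound coming from \eqref{sch2}, namely $X_{n+1}\ge \sqrt{\kappa h\theta(2+\kappa h)}/(2+\kappa h)$, which quantitatively prevents $Z_n$ from approaching $0$ at each fixed $h$ and lets one pass safely to the limit $\epsilon\to0$. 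An alternative, possibly cleaner route is to avoid the discrete chain rule entirely: from \eqref{sch2} one reads off directly that $X_{n+1}\ge \tfrac{\theta\kappa h}{2+\kappa h}\cdot\tfrac{1}{X_n+\tfrac12\sigma\Delta B_{n+1}+(\cdots)}$, giving a recursive lower bound for $X_{n+1}^{-1}$ in terms of $X_n$, $\Delta B_{n+1}$ and $h$; but this naive estimate loses the $h$-uniformity, so the Malliavin-based argument above is the one I would carry through.
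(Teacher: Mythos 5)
Your plan is a genuinely different route from the paper's, and as written it has a gap at exactly the point you flag as the ``main obstacle'': the discrete chain rule. Expanding $(\epsilon+Z_{n+1})^{-p}-(\epsilon+Z_n)^{-p}$ produces a second-order remainder of the form $(\epsilon+Z^{*})^{-p-2}(Z_{n+1}-Z_n)^{2}$ at an intermediate point $Z^{*}$ lying between $Z_n$ and $Z_{n+1}$; bounding its expectation requires inverse moments of order $p+2$ of the numerical solution, i.e.\ precisely the kind of estimate you are trying to prove, so the argument is circular. The auxiliary facts you invoke do not break this circularity, and your claimed deterministic floor $X_{n+1}\ge \sqrt{\kappa h\theta(2+\kappa h)}/(2+\kappa h)$ is false: writing $Y_n:=X_n+\frac12\sigma\Delta B_{n+1}$, formula \eqref{sch2} gives $X_{n+1}=\kappa h\theta\big/\big(\sqrt{Y_n^2+\kappa h\theta(2+\kappa h)}-Y_n\big)$, which tends to $0$ as $Y_n\to-\infty$, so there is no pathwise positive lower bound (your ``alternative route'' inequality fails for the same reason). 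The standard repair is the convexity (one-sided Taylor) inequality $(\epsilon+Z_{n+1})^{-p}-(\epsilon+Z_n)^{-p}\le -p(\epsilon+Z_{n+1})^{-p-1}(Z_{n+1}-Z_n)$, which kills the second-order term and is exactly what makes the backward Euler scheme amenable to this argument in \cite{DNS12}; even then you would need the two-sided bound on $D_\tau X_n$ from \eqref{DeXh}, which the paper only establishes under the extra hypothesis $\kappa>0$, so your range of validity would shrink relative to the stated proposition.

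The paper's own proof avoids all of this with an algebraic observation: since the implicit relation \eqref{sch1} already contains the term $\tfrac12\kappa\theta h X_{n+1}^{-1}$, one can solve for $X_{n+1}^{-1}$ and, subtracting the exact equation via \eqref{en+1}, obtain $X_{n+1}^{-1}=2(\kappa\theta h)^{-1}\big(e_n-(\tfrac12\kappa h+1)e_{n+1}+\tfrac12\kappa\theta h\,X(t_{n+1})^{-1}+R_n\big)$. The factor $h^{-1}$ is then absorbed by the order-one convergence $\big\|\sup_n|e_n|\big\|_{L^p(\Omega;\mathbb{R})}\le Ch$ of Theorem \ref{thm1} and by the local residual bound $\big\|\sup_n|R_n|\big\|_{L^p(\Omega;\mathbb{R})}\le Ch$ (reusing the estimates for $I,II,III$), while the remaining term is exactly $X(t_{n+1})^{-1}$, controlled by Proposition \ref{proX}. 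You should either adopt this identity or, if you insist on a self-contained discrete argument, restructure yours around the convexity inequality and accept the additional restriction $\kappa>0$.
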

\begin{proof}
	From \eqref{sch1} and \eqref{en+1}, we have
	\begin{align*}
	X_{n+1}^{-1}=2(\kappa\theta h)^{-1} \left(e_n-(\frac12 \kappa h+1)e_{n+1}+\frac12\kappa\theta h (X(t_{n+1}))^{-1}+R_n\right).
	\end{align*}
	According to Proposition \ref{proX} and Theorem \ref{thm1}, it suffices to estimate the term involving $R_n$.
	 Together with the decomposition \eqref{Rn} of $R_n$, the techniques to deal with $I,II,III$ in \eqref{en} yield
	\begin{align*}
h^{-1}\left(\mathbb{E}\left[\sup_{1\le n\le N}|R_n|^{p}\right]\right)\le C,
\end{align*}
which gives the boundedness of inverse moments of the numerical solution.
\end{proof}

\begin{proposition}
	Suppose that the assumptions in Theorem \ref{thm1} hold. If $\kappa>0$, then for any $t\in(t_n,t_{n+1}]$, $n=0,\cdots,N-1$,
	\begin{align}\label{DeXh}
	D_s[X^h(t)]=\frac{t_{n+1}-t}{h}G_n(s)\mathbf 1_{[0,t_n]}(s)+\frac{t-t_n}{h}G_{n+1}(s)\mathbf 1_{[0,t_{n+1}]}(s)
	\end{align}
	with $G_n(s)=\frac12\sigma \prod_{j=i}^{n}(1-f'(X_{j})h)^{-1}$ for $s\in(t_{i-1},t_{i}]$,  $i=1,\cdots,n$.
\end{proposition}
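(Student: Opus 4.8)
The plan is to reduce the claim to a formula for $D_sX_n$ and then invoke linearity of $D$. By \eqref{sch3}, for $t\in(t_n,t_{n+1}]$ one has $X^h(t)=\frac{t_{n+1}-t}{h}X_n+\frac{t-t_n}{h}X_{n+1}$, so \eqref{DeXh} follows at once from the linearity of the derivative operator once we show, by induction on $n$, that
$$D_sX_n=G_n(s)\,\mathbf 1_{[0,t_n]}(s),\qquad n=0,1,\dots,N,$$
with $G_n$ as in the statement (and $D_sX_0=0$, since $X_0$ is deterministic). So the whole proof is this one induction.

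Before differentiating, I would first check that $X_n\in\mathbb D^{1,p}$ for every $p\ge1$, again by induction. The explicit formula \eqref{sch2} writes $X_{n+1}=\psi\!\left(X_n+\tfrac12\sigma\Delta B_{n+1}\right)$ with $\psi(a):=\frac{a+\sqrt{a^2+\kappa h\theta(2+\kappa h)}}{2+\kappa h}$; since $\kappa\theta>0$ the radicand is bounded below by $\kappa h\theta(2+\kappa h)>0$, so $\psi$ is smooth with $0<\psi'(a)<\frac{2}{2+\kappa h}$, hence globally Lipschitz. Because $\Delta B_{n+1}=B(\mathbf 1_{(t_n,t_{n+1}]})\in\mathbb D^{1,p}$ with $D_s\Delta B_{n+1}=\mathbf 1_{(t_n,t_{n+1}]}(s)$, the chain rule for the Malliavin derivative \cite{Nualart} together with the inductive hypothesis $X_n\in\mathbb D^{1,p}$ gives $X_{n+1}\in\mathbb D^{1,p}$ and also legitimizes differentiating the implicit relation \eqref{sch1}.

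Differentiating \eqref{sch1} with the chain rule gives
$$D_sX_{n+1}=D_sX_n+f'(X_{n+1})\,h\,D_sX_{n+1}+\tfrac12\sigma\,\mathbf 1_{(t_n,t_{n+1}]}(s).$$
Here the hypothesis $\kappa>0$ enters: for $x>0$ one has $f'(x)=-\frac{\kappa\theta}{2x^2}-\frac{\kappa}{2}<0$, so $1-f'(X_{n+1})h>1$ and the relation can be solved,
$$D_sX_{n+1}=(1-f'(X_{n+1})h)^{-1}\Big(D_sX_n+\tfrac12\sigma\,\mathbf 1_{(t_n,t_{n+1}]}(s)\Big).$$
Inserting the inductive hypothesis for $D_sX_n$ and splitting according to $s$: if $s\in(t_{i-1},t_i]$ with $i\le n$, then $\mathbf 1_{(t_n,t_{n+1}]}(s)=0$ and $D_sX_n=\tfrac12\sigma\prod_{j=i}^n(1-f'(X_j)h)^{-1}$, whence $D_sX_{n+1}=\tfrac12\sigma\prod_{j=i}^{n+1}(1-f'(X_j)h)^{-1}$; if $s\in(t_n,t_{n+1}]$ then $D_sX_n=0$ and the indicator equals $1$, giving $D_sX_{n+1}=\tfrac12\sigma(1-f'(X_{n+1})h)^{-1}$; and if $s>t_{n+1}$ both terms vanish. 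In every case $D_sX_{n+1}=G_{n+1}(s)\,\mathbf 1_{[0,t_{n+1}]}(s)$, which closes the induction; substituting the cases $n$ and $n+1$ into $D_sX^h(t)=\frac{t_{n+1}-t}{h}D_sX_n+\frac{t-t_n}{h}D_sX_{n+1}$ yields \eqref{DeXh}.

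The main obstacle, and essentially the only nonroutine point, is establishing $X_n\in\mathbb D^{1,p}$ and the applicability of the chain rule to the implicit scheme; this is precisely where $\kappa>0$ (so that $1-f'(X_{n+1})h>0$) and the strict positivity of the discriminant in \eqref{sch2} are used. Everything after that is the induction above, which parallels the derivation \eqref{DeX} of the Malliavin derivative of the exact solution.
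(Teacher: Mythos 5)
Your proof is correct, but it takes a genuinely different route from the paper's. The paper never differentiates the scheme directly: it perturbs the driving path in a Cameron--Martin direction $\tilde\varphi={\rm R_H}\varphi$, applies the mean value theorem to the perturbed scheme to get $X^\epsilon_n-X_n=\epsilon\langle G_n^\epsilon\mathbf 1_{[0,t_n]},\varphi\rangle_{\mathcal H}$, passes to the limit $\epsilon\to0$, and invokes Sugita's characterization of Sobolev spaces over Wiener space \cite{S85} to conclude $X_n\in\mathbb{D}^{1,2}$ with $DX_n=G_n\mathbf 1_{[0,t_n]}$; the hypothesis $\kappa>0$ enters there to make $G_n^\epsilon$ bounded so that dominated convergence applies. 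You instead exploit the closed form \eqref{sch2}, writing $X_{n+1}=\psi\bigl(X_n+\tfrac12\sigma\Delta B_{n+1}\bigr)$ with $\psi$ smooth and globally Lipschitz (since the radicand is bounded below by $\kappa h\theta(2+\kappa h)>0$), and run an induction with the Lipschitz chain rule from \cite{Nualart}. This is more elementary, avoids the directional-derivative machinery entirely, and yields $X_n\in\mathbb{D}^{1,p}$ for all $p\ge1$ rather than just $\mathbb{D}^{1,2}$; the paper's argument is the one that generalizes to implicit schemes with no closed-form resolvent. One point you should make explicit rather than leave as "legitimizes differentiating the implicit relation": the chain rule cannot be applied directly to $f(X_{n+1})$ in \eqref{sch1}, since $x\mapsto\theta/x$ is not globally Lipschitz and $X_{n+1}$ may be close to $0$. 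The clean fix, consistent with your setup, is the deterministic identity obtained by implicitly differentiating $\psi(a)-f(\psi(a))h=a$, namely $\psi'(a)=(1-f'(\psi(a))h)^{-1}$, which is well defined because $\psi(a)>0$ and $f'<0$ on $(0,\infty)$ when $\kappa>0$; then the Malliavin chain rule is only ever applied to the globally Lipschitz $\psi$, and your recursion $D_sX_{n+1}=(1-f'(X_{n+1})h)^{-1}\bigl(D_sX_n+\tfrac12\sigma\mathbf 1_{(t_n,t_{n+1}]}(s)\bigr)$ follows. With that patch the induction and the final appeal to linearity of $D$ on \eqref{sch3} are complete and match \eqref{DeXh}.
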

\begin{proof}
	For any $\varphi\in \mathcal H$, noticing that the operator ${\rm R_H}$ is an injection from $\mathcal{H}$ to $\mathcal{H}_H$ (see e.g.  \cite{DU99PA}), we consider the following equation 
	\begin{align}\label{eq4}
	dX^\epsilon(t)&=\frac12\kappa \left(\frac{\theta}{X^\epsilon(t)}-X^\epsilon(t)\right)dt+\frac12\sigma dB(t)+\frac12\sigma \epsilon d\tilde{\varphi}(t), \quad t\in (0,T];\\
	X^\epsilon(0)&=X_0 \nonumber
	\end{align}
	with $\tilde{\varphi}={\rm R_H}\varphi$ and $\epsilon>0$. Based on the backward Euler scheme \eqref{sch2}, we get numerical solution $\{X^\epsilon_n\}_{n=0}^{N}$ for equation \eqref{eq4}.
	Then the directional derivarive of $X_n$ is given by
	\begin{align*}
	<DX_n,\varphi>_{\mathcal H}=\frac{{\rm d} X^\epsilon_n}{{\rm d} \epsilon} | _{\epsilon=0},\quad \varphi\in \mathcal H.
	\end{align*}
	The mean value theorem shows that for any $n=0,\cdots,N-1$,
	\begin{align*}
	X^\epsilon_{n+1}-X_{n+1}&=X^\epsilon_{n}-X_{n}+(f(X^\epsilon_{n+1})-f(X_{n+1}))h+\frac12\sigma \epsilon (\tilde{\varphi}(t_{n+1})-\tilde{\varphi}(t_{n}))\\
	&=X^\epsilon_{n}-X_{n}+f'(\tilde{X}^\epsilon_{n+1})\left(X^\epsilon_{n+1}-X_{n+1}\right)h+\frac12\sigma \epsilon (\tilde{\varphi}(t_{n+1})-\tilde{\varphi}(t_{n}))
	\end{align*}
	with $\tilde{X}^\epsilon_{n+1}=\tilde{h}^\epsilon_{n+1} X^\epsilon_{n+1}+(1-\tilde{h}^\epsilon_{n+1})X_{n+1}$ for some $\tilde{h}^\epsilon_{n+1}\in[0,1]$.
	Denoting $a^\epsilon_{n+1}:=(1-f'(\tilde{X}^\epsilon_{n+1})h)^{-1}$, we have 
	\begin{align*}
	X^\epsilon_{n+1}-X_{n+1}=a^\epsilon_{n+1}(X^\epsilon_{n}-X_{n})+\frac12\sigma \epsilon a^\epsilon_{n+1} (\tilde{\varphi}(t_{n+1})-\tilde{\varphi}(t_{n})).
	\end{align*} 
	Then 
	\begin{align*}
	X^\epsilon_{n}-X_{n}=\frac12\sigma \epsilon \sum_{i=1}^{n}\left(\prod_{j=i}^{n}a^\epsilon_{j} \right) (\tilde{\varphi}(t_{i})-\tilde{\varphi}(t_{i-1})).
	\end{align*} 
	Define $G_n^\epsilon(t):=\frac12\sigma \prod_{j=i}^{n}a^\epsilon_{j}$, for $t\in(t_{i-1},t_{i}]$ and $i=1,\cdots,n$. Recalling the definitions of ${\rm R_H}$ and $K^*$ introduced in Section \ref{sec2}, we obtain that 
	\begin{align*}
	X^\epsilon_{n}-X_{n}&=\epsilon \int_{0}^{t_n}G_n^\epsilon(\tau) d \tilde{\varphi}(\tau)\\
	&=\epsilon \int_{0}^{t_n}G_n^\epsilon(\tau) d {\rm R_H}\varphi(\tau)\\
	&= \epsilon \int_{0}^{t_n}G_n^\epsilon(\tau) \left[  \int_{0}^{\tau} \frac{\partial K_H (\tau,u)}{\partial \tau} (K^* \varphi)(u) du \right]  d\tau\\
	&= \epsilon \int_{0}^{t_n} \left[  \int_{u}^{t_n} G_n^\epsilon(\tau) \frac{\partial K_H (\tau,u)}{\partial \tau}  d\tau \right]  (K^* \varphi)(u)du\\
	&= \epsilon \int_{0}^{t_n} (K^* (G_n^\epsilon\mathbf 1_{[0,t_n]}))(u)(K^* \varphi)(u)du.
	\end{align*} 
	The isometry \eqref{iso} gives 
	\begin{align*}
	X^\epsilon_{n}-X_{n}= \epsilon <G_n^\epsilon\mathbf 1_{[0,t_n]},\varphi>_{\mathcal H}.
	\end{align*} 
	Note that $G_n^\epsilon$ is bounded due to the assumption $\kappa>0$ and the fact $f'(x)=-\frac12\kappa\theta\frac{1}{x^2}-\frac12\kappa<0$.  By the definition of scheme \eqref{sch2} and the dominated convergence theorem, we get that the following equality holds in both almost surely sense and $L^p(\Omega;\mathbb{R})$ sense for $p\ge 1$:
	\begin{align*}
	\lim_{\epsilon\rightarrow 0}\frac{X^\epsilon_{n}-X_{n}}{\epsilon}=\lim_{\epsilon\rightarrow 0} <G_n^\epsilon\mathbf 1_{[0,t_n]},\varphi>_{\mathcal H}=<G_n\mathbf 1_{[0,t_n]},\varphi>_{\mathcal H},
	\end{align*} 
	where $G_n(t):=\frac12\sigma \prod_{j=i}^{n}(1-f'(X_{j})h)^{-1}$ for any $t\in(t_{i-1},t_{i}]$ and $i=1,\cdots,n$. It follows from \cite[Theorem 3.1]{S85} that $X_n\in\mathbb{D}^{1,2}$ and $DX_n=G_n\mathbf 1_{[0,t_n]}$.
	
	By the definition of $X^h$, for any $t\in(t_n,t_{n+1}]$, we obtain the expression of the Malliavin derivative of $X^h$
	\begin{align*}
	D_s[X^h(t)]&=D_s\left[\frac{t_{n+1}-t}{h}X_n+\frac{t-t_n}{h}X_{n+1}\right]\\
	&=\frac{t_{n+1}-t}{h}G_n(s)\mathbf 1_{[0,t_n]}(s)+\frac{t-t_n}{h}G_{n+1}(s)\mathbf 1_{[0,t_{n+1}]}(s).
	\end{align*}
\end{proof}	

\begin{remark}\label{rk4}
	Notice $e^x\approx 1+x$ when $x$ is close to $0$. One can regard the expression of $D_s[X^h(t)]$ as an approximation for $D_s[X(t)]$ based on the backward Euler scheme, comparing \eqref{DeX} and \eqref{DeXh}.
\end{remark}


\section{Numerical Experiments}\label{sec5}
In the CIR model \eqref{eq1}, we set the initial value $r_0=1$, the constants $\kappa=2$, $\theta=0.5$ and $\sigma=0.5$. Figure \ref{f1} presents the pathwise behaviors of the numerical solutions for three cases $H=0.6,0.7,0.8$ with $T=10$ and $h=\frac{10}{2^{12}}=0.0024$. We observe that the solution is pushed forward along the direction from the initial value to the long-term mean value $\theta$ and the regularity of the solution is impacted by that of the fBm.

\begin{figure}
	\centering
	\subfigure[$H=0.6$]{
		\begin{minipage}[t]{0.3\linewidth}
			\includegraphics[height=3.6cm,width=3.6cm]{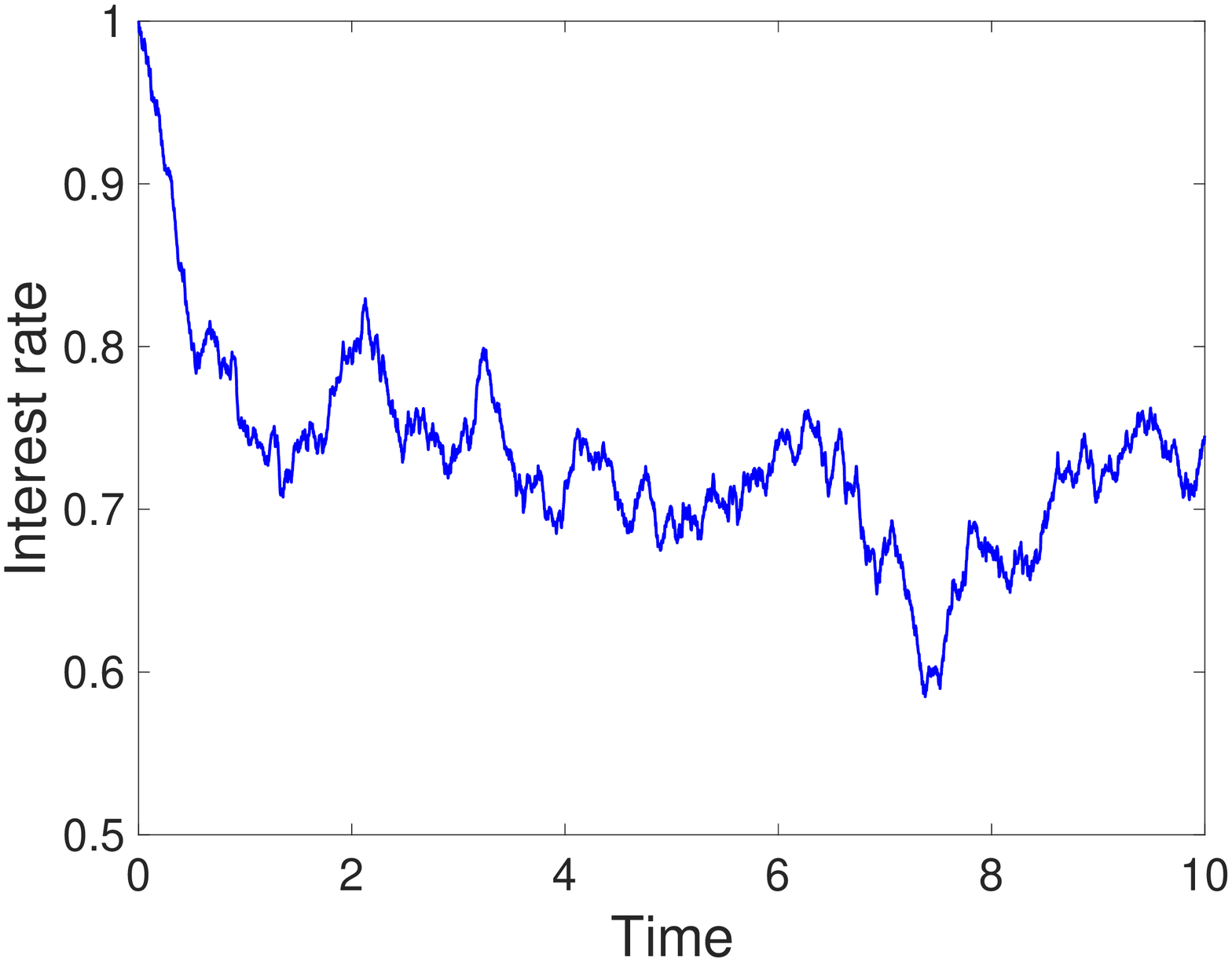}
		\end{minipage}
	}
	\subfigure[$H=0.7$]{
		\begin{minipage}[t]{0.3\linewidth}
			\includegraphics[height=3.6cm,width=3.6cm]{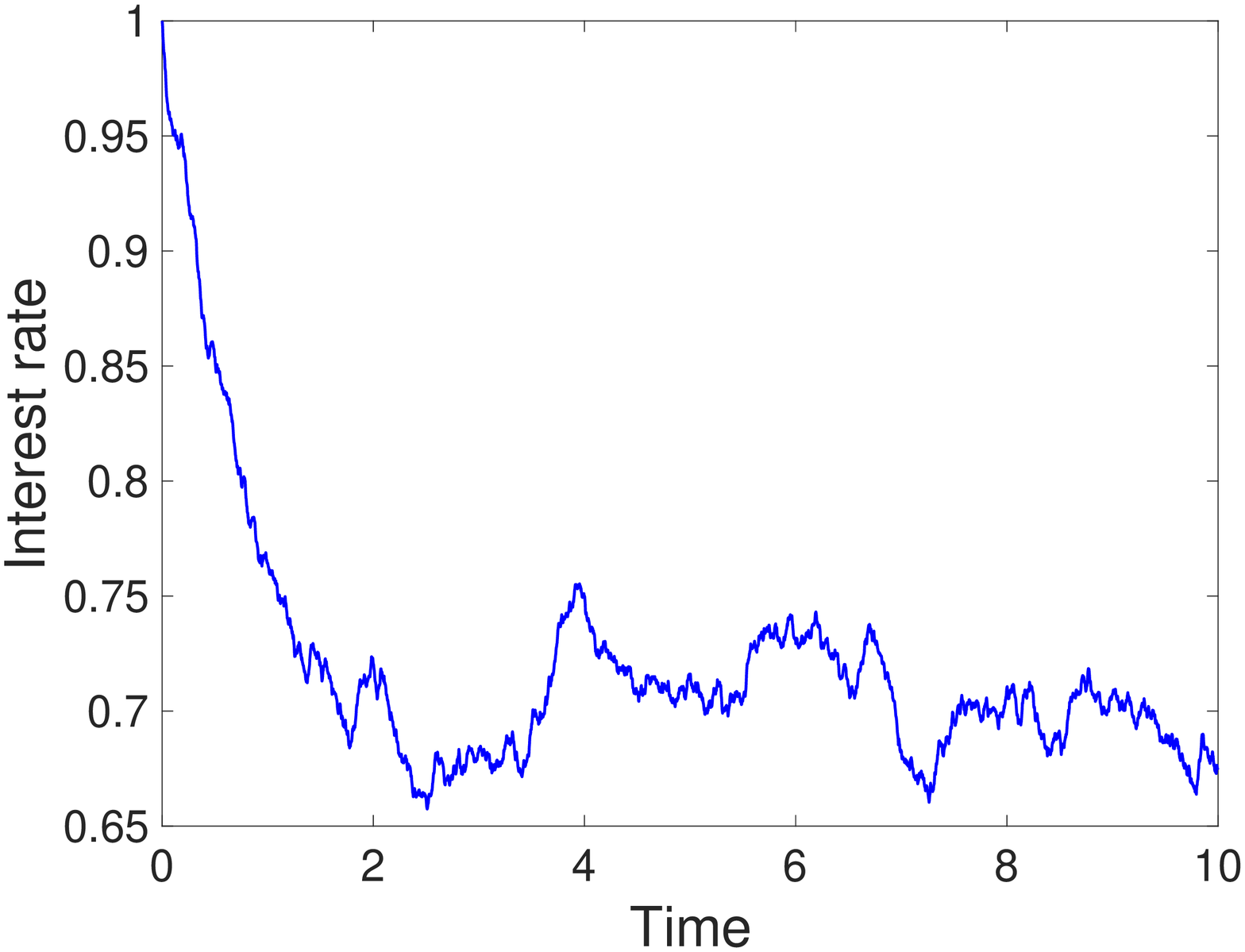}
		\end{minipage}
	}
	\subfigure[$H=0.8$]{
		\begin{minipage}[t]{0.3\linewidth}
			\includegraphics[height=3.6cm,width=3.6cm]{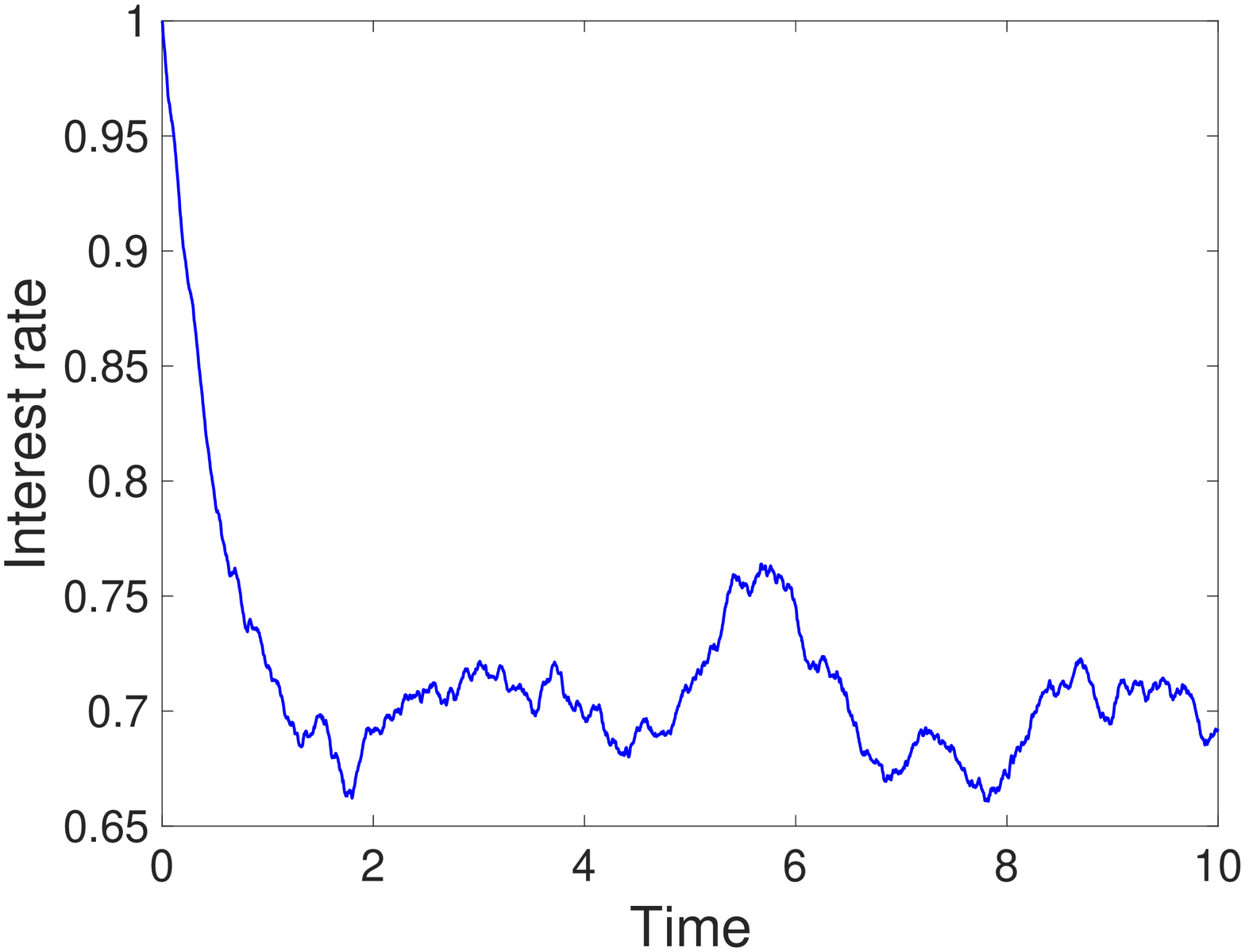}
		\end{minipage}
	}
	\caption{Interest rate $r^h$ vs. time}\label{f1}
\end{figure}

Since condition \eqref{app} is satisfied for $p=6$ and $T=1$,  we have from Theorem \ref{thm1} that 
\begin{align*}
\left(\mathbb{E}\left[\sup_{1\le n\le N}|X(t_n)-X_n|^{2}\right]\right)^{\frac12}\le C h.
\end{align*}
To verify the strong convergence order of the backward Euler scheme, we simulate the `exact' solution by using the small step size $h^*=2^{-15}$. The five step sizes $h=2^{-i}$, $i=6,7,8,9,10$ are chosen for numerical solutions. In Figure \ref{f2}, we calculate the mean square error with respect to $X_n$ by
\begin{align*}
\left(\mathbb{E}\left[\sup_{1\le n\le T/h}|X(t_n)-X_n|^{2}\right]\right)^{\frac12}
\end{align*}
and approximate the expectation by computing averages over 500 samples. The numerical results confirm our theoretical analysis.

\begin{figure}
	\centering
	\subfigure[$H=0.6$]{
		\begin{minipage}[t]{0.3\linewidth}
			\includegraphics[height=3.6cm,width=3.6cm]{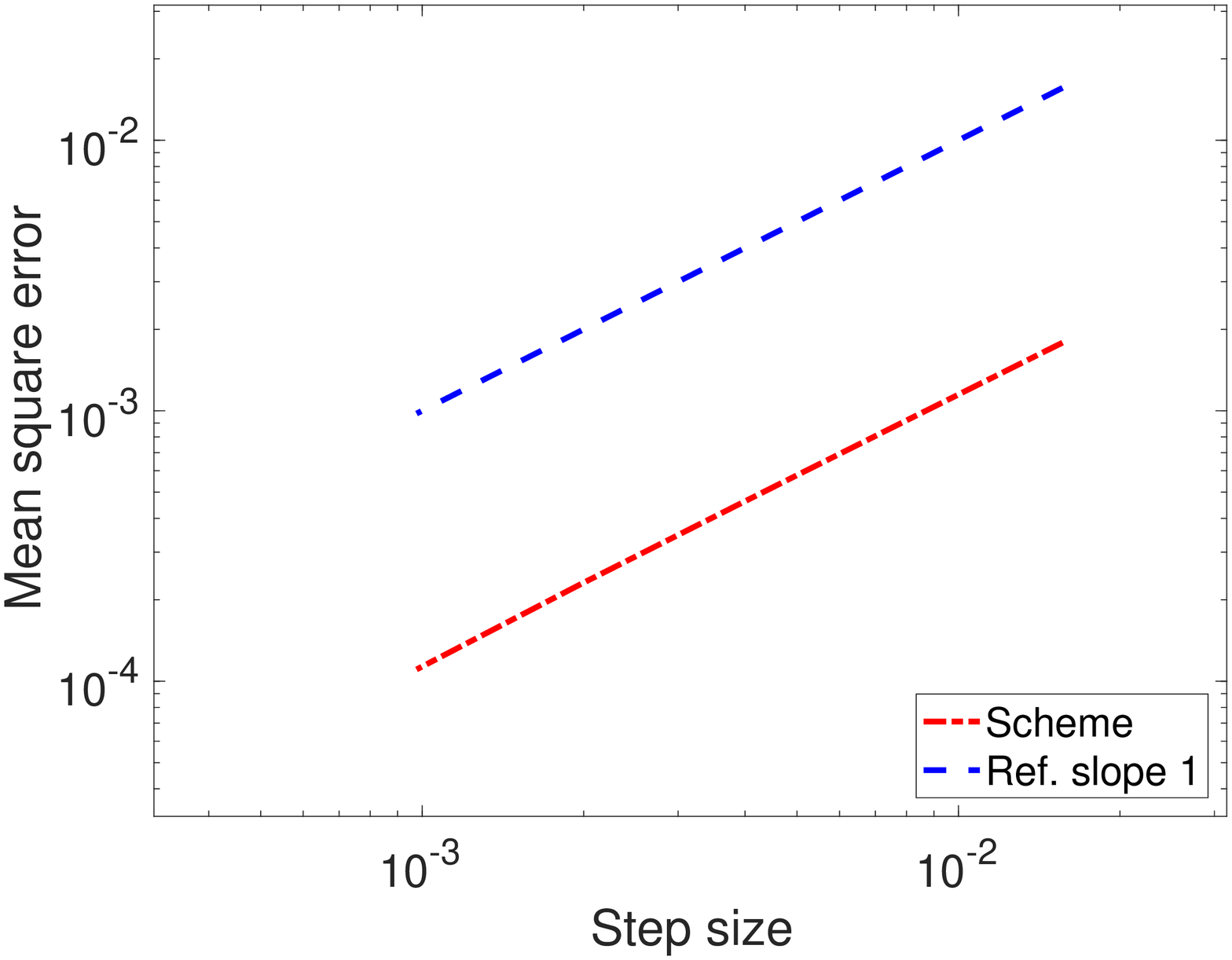}
		\end{minipage}
	}
	\subfigure[$H=0.7$]{
		\begin{minipage}[t]{0.3\linewidth}
			\includegraphics[height=3.6cm,width=3.6cm]{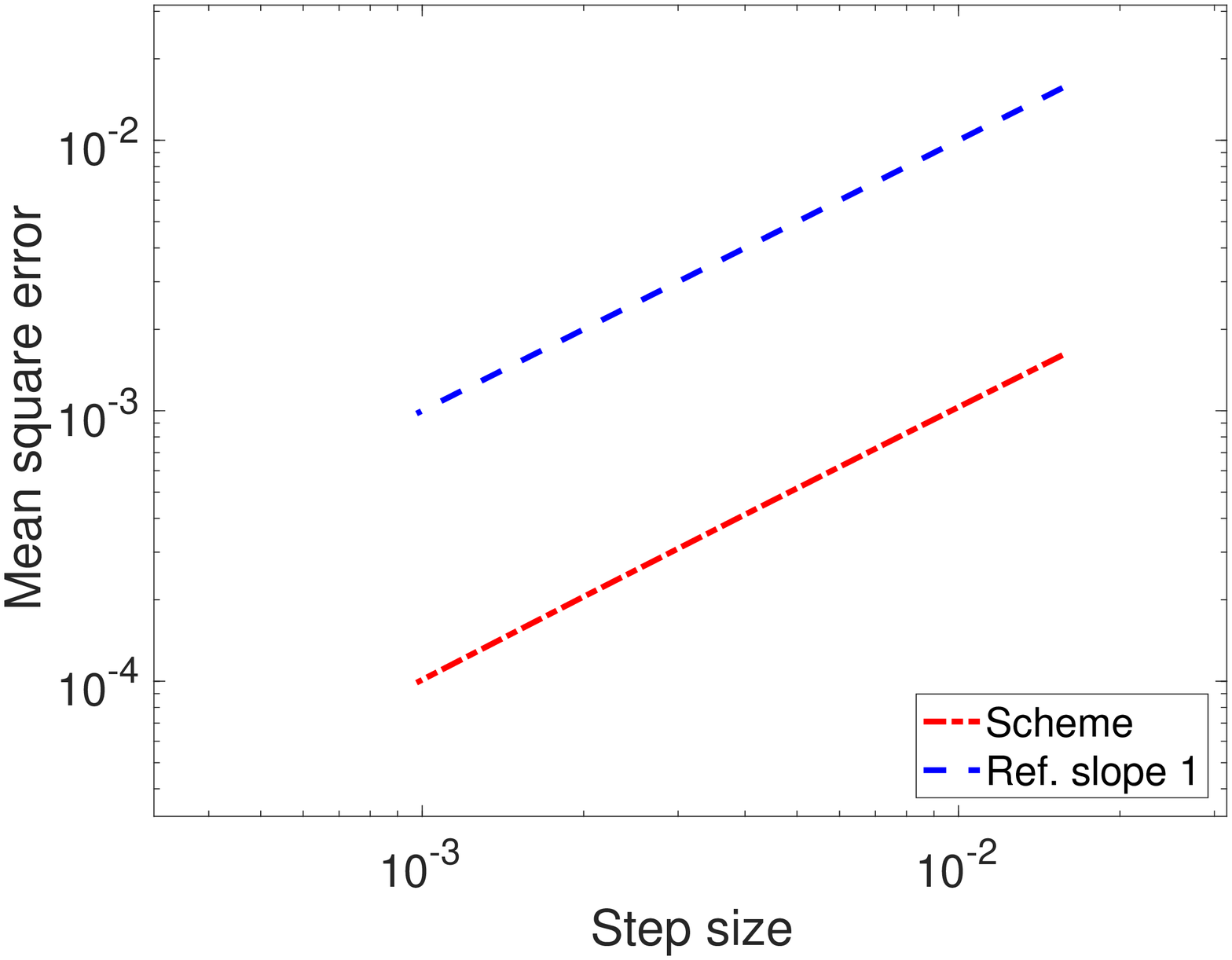}
		\end{minipage}
	}
	\subfigure[$H=0.8$]{
		\begin{minipage}[t]{0.3\linewidth}
			\includegraphics[height=3.6cm,width=3.6cm]{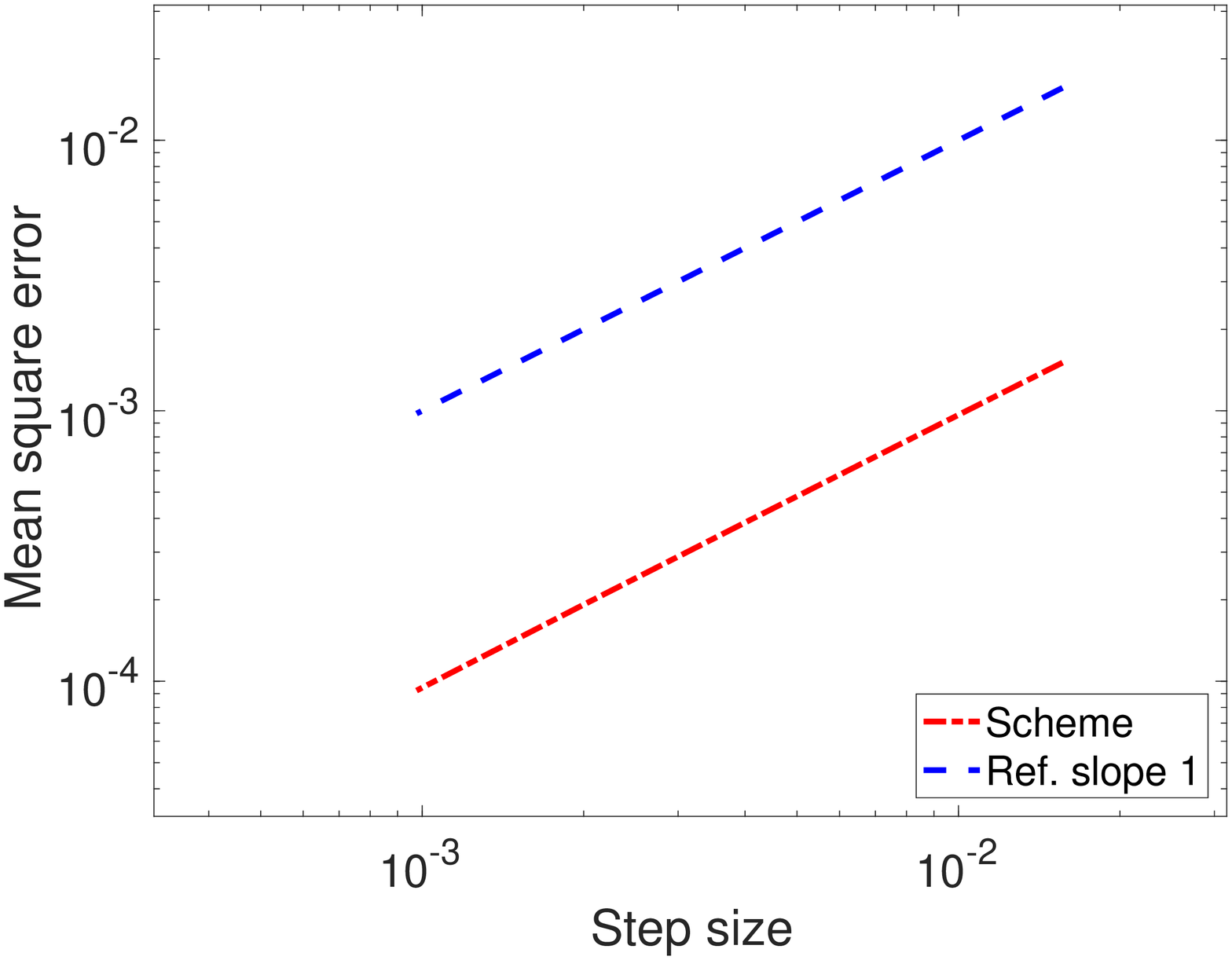}
		\end{minipage}
	}
	\caption{Mean square error for $X_n$ vs. step size }\label{f2}
\end{figure}

Under the same assumption, Theorem \ref{thm2} leads to 
\begin{align*}
\left(\mathbb{E}\left[\sup_{0\le t\le T}|X(t)-X^h(t)|^{2}\right]\right)^{\frac12}\le C h^{H}\sqrt{\log \frac{T}{h}}.
\end{align*}
To demonstrate the convergence rate on $[0,T]$, we also simulate the `exact' solution by using the small step size $h^*=2^{-15}$. The five step sizes $h=2^{-i}$, $i=6,7,8,9,10$ are chosen for numerical solutions. Then we apply the piecewise linear interpolation to get the numerical solutions $X^h(nh^*)$, $n=1,\cdots,2^{15}$. In Figure \ref{f3}, we calculate the mean square error with respect to $X^h$ by
\begin{align*}
\left(\mathbb{E}\left[\sup_{n=1,\cdots,2^{15}}|X(nh^*)-X^h(nh^*)|^{2}\right]\right)^{\frac12}
\end{align*}
and approximate the expectation by computing averages over 500 samples. In consistent with our theoretical analyisis, the strong convergence order for the piecewise linear interpolation is $H$ which is mainly caused by the H\"older regularity of fBm. Additionally, the error becomes smaller when $H$ becomes larger, due to the fact that the regularity of the fBm becomes better.

\begin{figure}
	\centering
	\subfigure[$H=0.6$]{
		\begin{minipage}[t]{0.3\linewidth}
			\includegraphics[height=3.6cm,width=3.6cm]{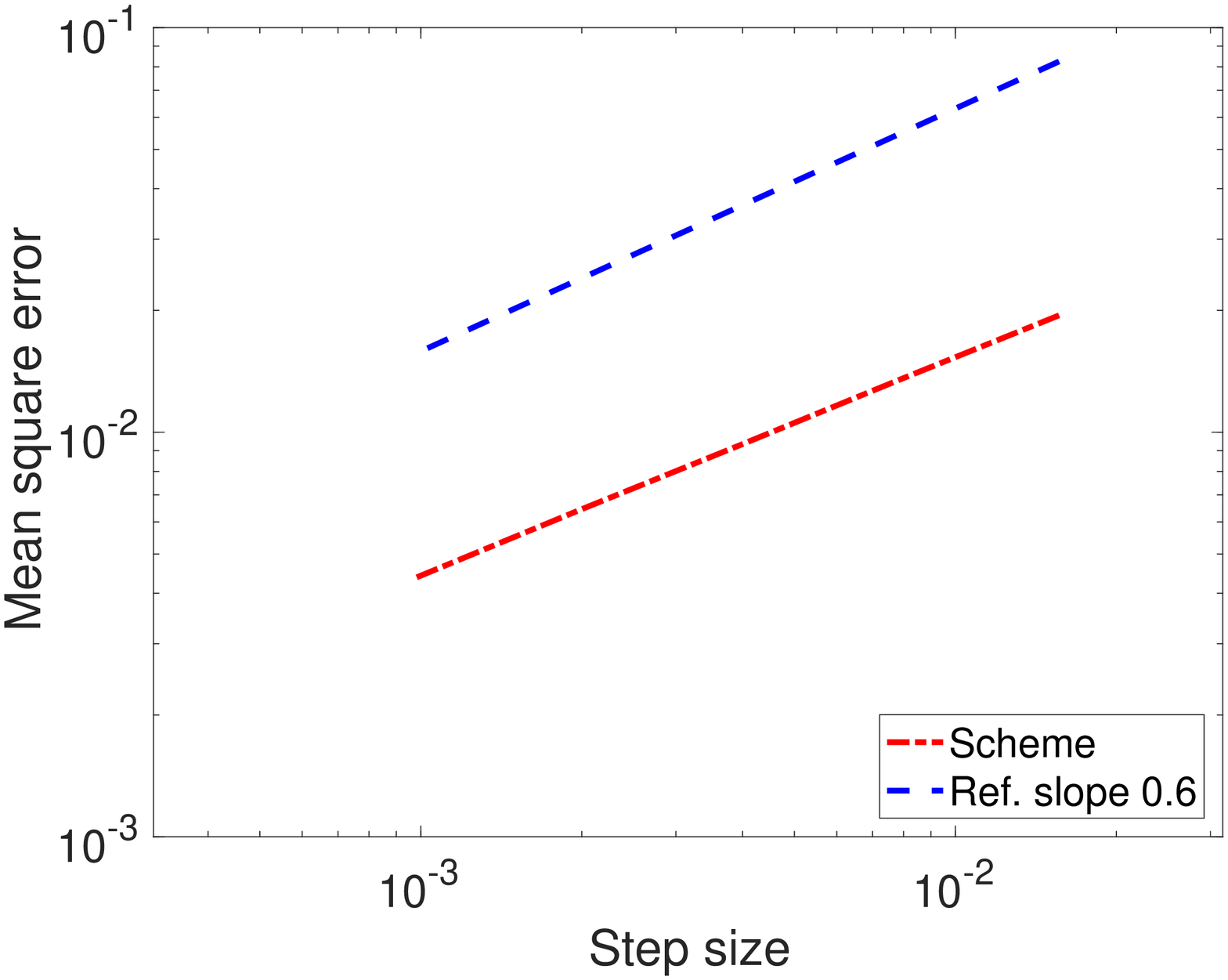}
		\end{minipage}
	}
	\subfigure[$H=0.7$]{
		\begin{minipage}[t]{0.3\linewidth}
			\includegraphics[height=3.6cm,width=3.6cm]{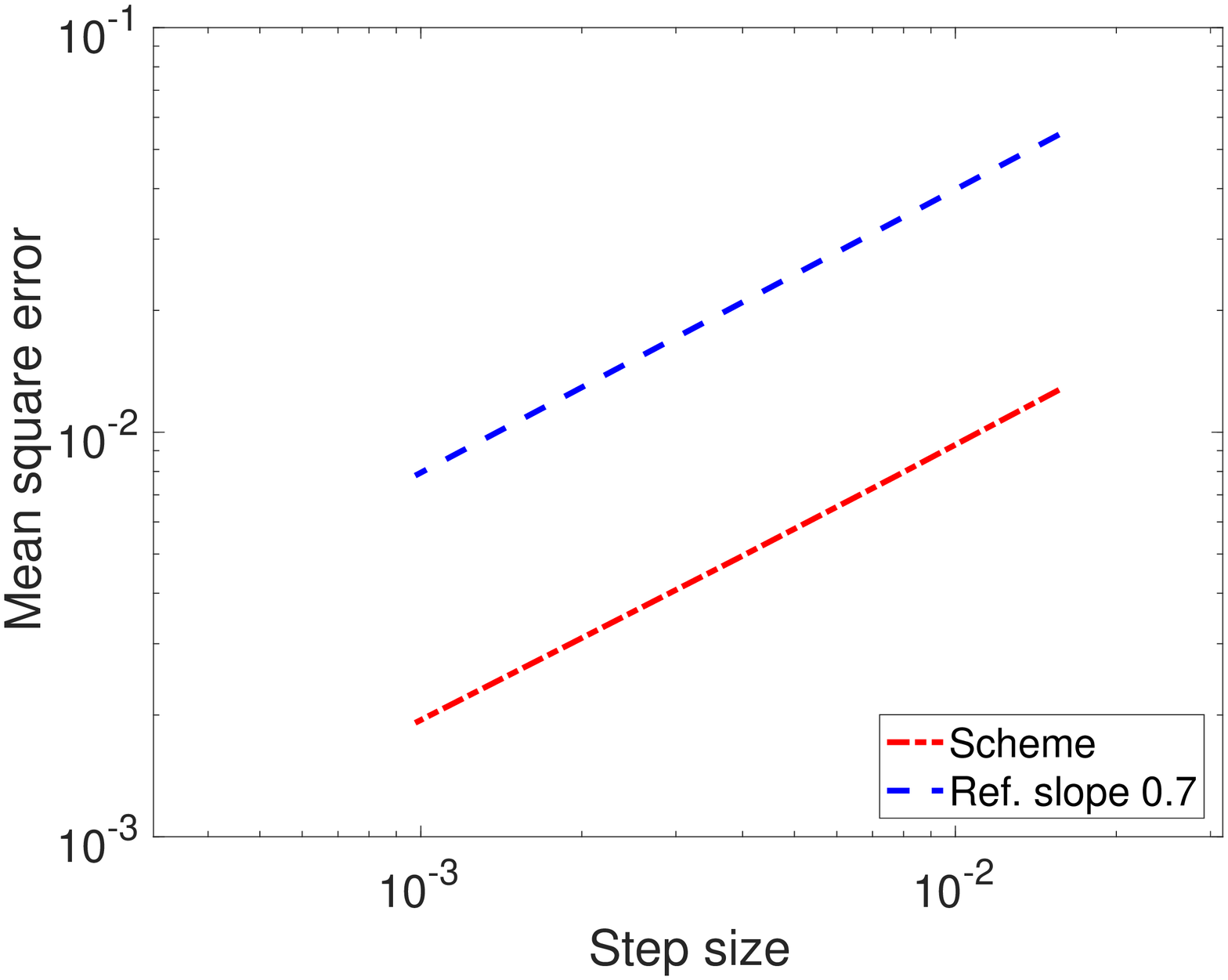}
		\end{minipage}
	}
	\subfigure[$H=0.8$]{
		\begin{minipage}[t]{0.3\linewidth}
			\includegraphics[height=3.6cm,width=3.6cm]{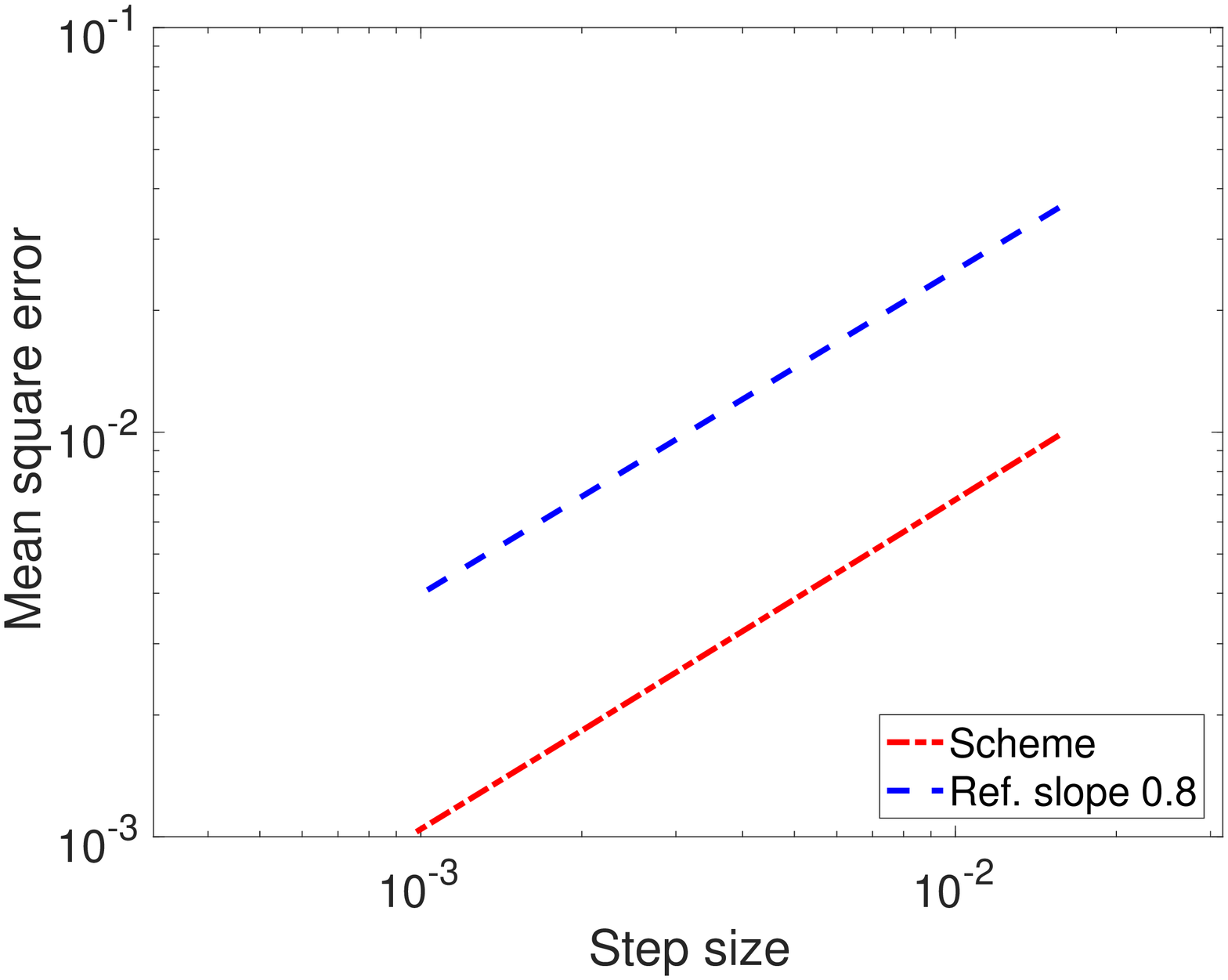}
		\end{minipage}
	}
	\caption{Mean square error for $X^h$ vs. step size }\label{f3}
\end{figure}

In Figure \ref{f4}, we take $T=10$ and $h=\frac{10}{2^{15}}$. We calculate $\|(X_n)^{-1}\|_{L^2(\Omega;\mathbb{R})}$ for $n=0,\cdots,2^{15}$, where the expectation is approximated by computing averages over 100 samples. This coincides with our theoretical analysis for the boundedness of inverse moments of numerical solutions.

\begin{figure}
	\centering
	\subfigure[$H=0.6$]{
		\begin{minipage}[t]{0.3\linewidth}
			\includegraphics[height=3.6cm,width=3.6cm]{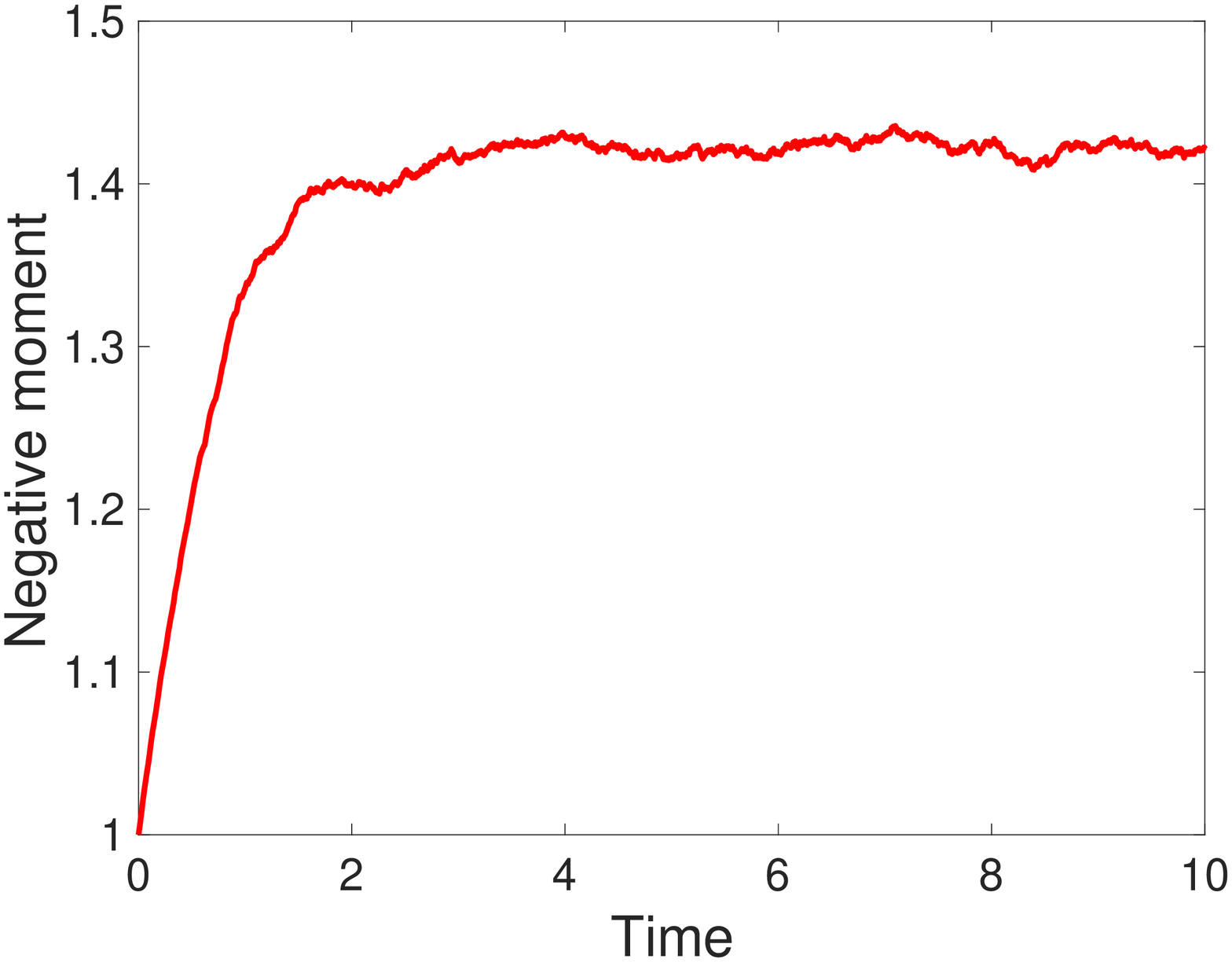}
		\end{minipage}
	}
	\subfigure[$H=0.7$]{
		\begin{minipage}[t]{0.3\linewidth}
			\includegraphics[height=3.6cm,width=3.6cm]{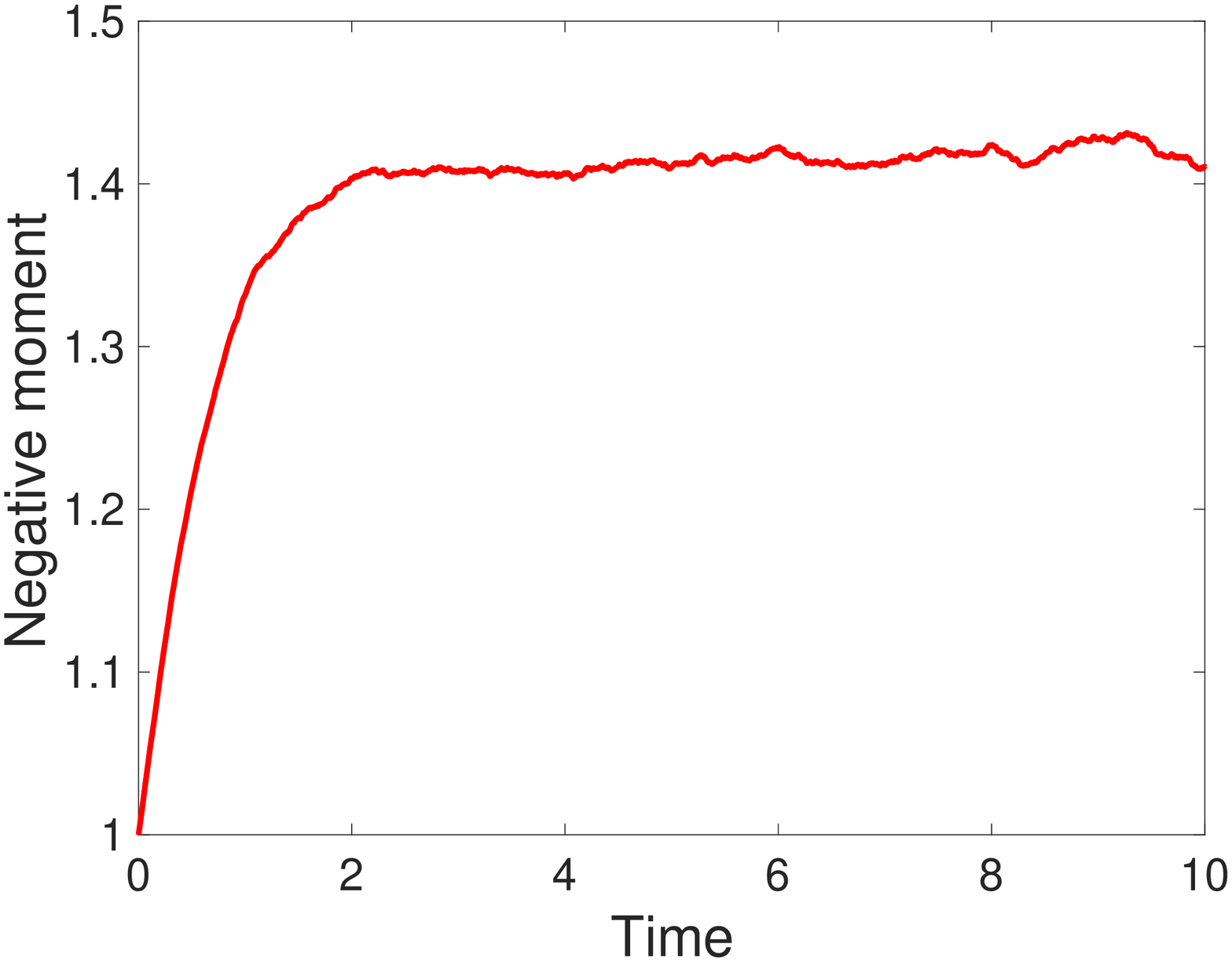}
		\end{minipage}
	}
	\subfigure[$H=0.8$]{
		\begin{minipage}[t]{0.3\linewidth}
			\includegraphics[height=3.6cm,width=3.6cm]{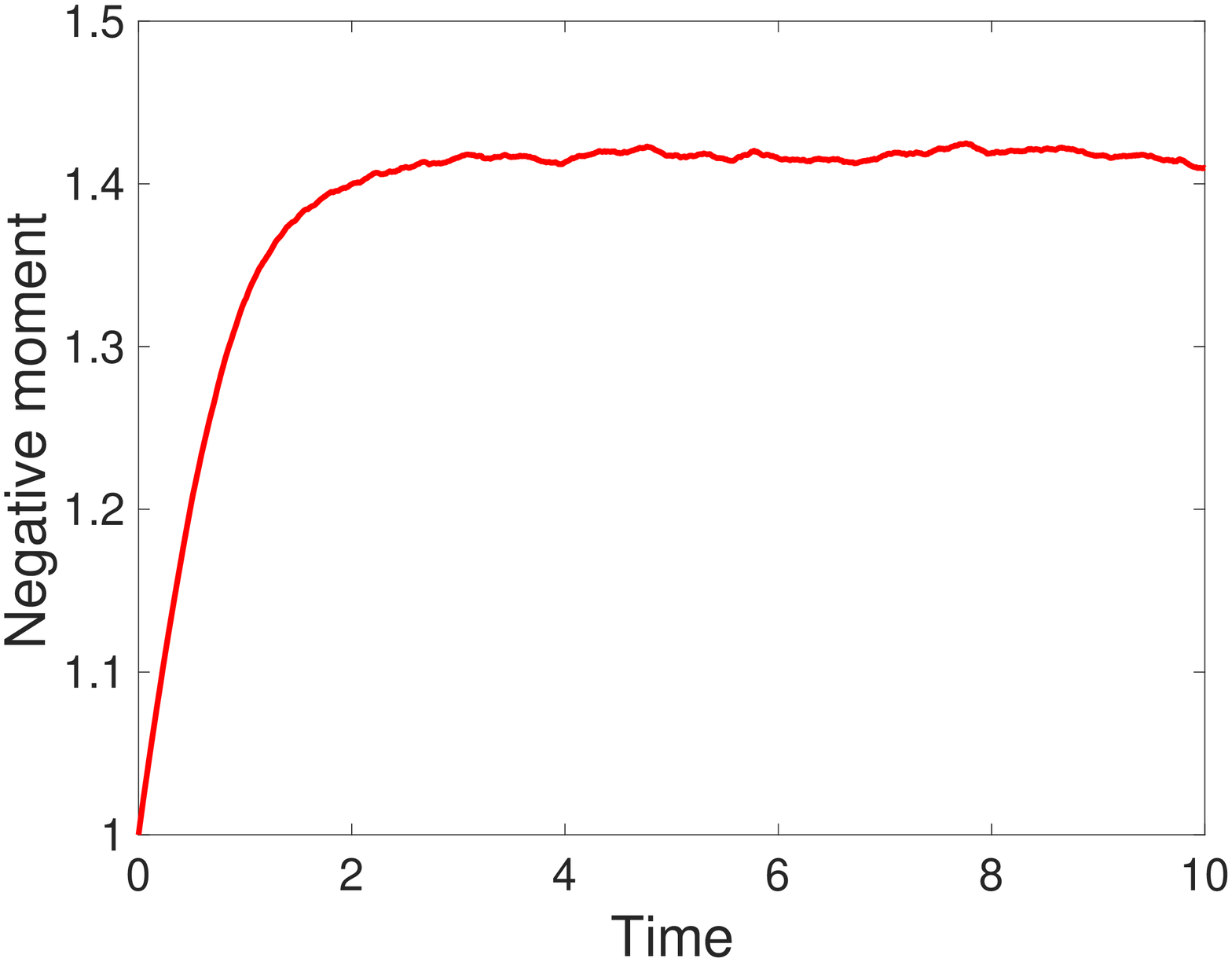}
		\end{minipage}
	}
	\caption{Inverse moment of $X_n$ vs. time}\label{f4}
\end{figure}

\section*{Acknowledgement}
The authors gratefully thank the anonymous referees for many valuable comments and suggestions in improving this paper.
This work is funded by National Natural Science Foundation of China (NO. 91530118, NO. 91130003, NO. 11021101, NO. 91630312 and NO. 11290142).

\section*{References}

\end{document}